\newtheorem{theorem}{Theorem}[section]
\newtheorem{lemma}[theorem]{Lemma}
\newtheorem{proposition}[theorem]{Proposition}
\newtheorem{definition}[theorem]{Definition}
\newtheorem{corollary}[theorem]{Corollary}
\newtheorem{remark}[theorem]{Remark}
\def\Om{\Omega}
\def\p{\partial}
\def\ep{\epsilon}
\def\de{\delta}
\def\S{{\Sigma}}
\def\<{\langle}
\def\>{\rangle}
\def\na{\nabla}
\providecommand{\abs}[1]{\lvert#1\rvert}
\providecommand{\norm}[1]{\lVert#1\rVert}
\newcommand{\mbM}{\mathbb{M}}
\newcommand{\mbN}{\mathbb{N}}
\newcommand{\mcH}{\mathcal{H}}
\newcommand{\mcL}{\mathcal{L}}
\newcommand{\mcR}{\mathcal{R}}
\newcommand{\mfc}{\mathbf{c}}
\newcommand{\mfn}{\mathbf{n}}
\newcommand{\mfq}{\mathbf{q}}
\newcommand{\mfv}{\mathbf{v}}
\newcommand{\mfR}{\mathbf{R}}
\newcommand{\mfS}{\mathbf{S}}
\newcommand{\rd}{{\rm d}}
\newcommand{\wsc}{\overset{\ast}{\rightharpoonup}}
\newcommand{\ra}{\rightarrow}
\newcommand{\eq}[1]{\begin{equation}\begin{alignedat}{2} #1 \end{alignedat}\end{equation}}
\numberwithin{equation} {section}
\begin{document}

	
\title[Compactness of capillary hypersurfaces]{Compactness of capillary hypersurfaces with mean curvature prescribed by ambient functions}
\date{\today}

\author[Zhang]{Xuwen Zhang}
	\address[X.Z]{Mathematisches Institut\\Universit\"at Freiburg\\Ernst-Zermelo-Str.1\\79104\\Freiburg\\ Germany}
\email{xuwen.zhang@math.uni-freiburg.de}

\begin{abstract}
We prove a compactness result for capillary hypersurfaces with mean curvature prescribed by ambient functions, which generalizes the results of Sch\"atzle \cite{Schatzle01} and Bellettini \cite{Bellettini23} to the capillary case.
The proof relies on extending the definition of (unoriented) curvature varifolds with capillary boundary introduced by Wang-Zhang \cite{WZ} to the context of oriented integral varifolds.
We also discuss the case when the mean curvature of the boundary is prescribed.
\

\noindent {\bf MSC 2020: 49Q15, 53C42}\\
{\bf Keywords:} oriented integral varifolds, prescribed mean curvature, capillarity, weak second fundamental form\\
\end{abstract}

\maketitle
\tableofcontents
\section{Introduction}
In \cite{Schatzle01}, Sch\"atzle proposed and addressed the following problem:
for a sequence of hypersurfaces $M_l\subset\mfR^{n+1}$, arising as reduced boundary of sets of finite perimeter $E_l$ in $\mfR^{n+1}$, with the mean curvature of $M_l$ prescribed by an ambient function $g_l$ for each $l\in\mbN$.
If $M_l$ converges in a certain weak sense (in fact, as unoriented integral varifolds) to $M$ and $g_l$ converges in a certain sense 
to some ambient function $g$, will the prescribed mean curvature property preserved in the limit?

We call this a \textit{prescribed mean curvature problem}.
As discussed in \cite{Schatzle01}, the difficulty of solving this problem lies in the fact that, in the weak limit there might be \textit{hidden boundary}, which is due to cancellations when parts of $M_l$ meet having opposite normals.
See \cite[pp. 376]{Schatzle01} for an illustrative example, which is due to Große-Brauckmann \cite{G-B93,G-B98}.

Sch\"atlze's results show that, when $g_l$ converges to $g$ in the $W^{1,p}$-sense for $p$ suitably ranged, and $M_l$ has uniformly mass bounds, then the prescribed mean curvature property is preserved in the limit (\cite[Theorem 1.1]{Schatzle01}).
Furthermore, the situations whether there is boundary cancellation happening in the limit or not can be distinguished by the parity of density of the limiting unoriented integral $n$-varifold (\cite[Theorem 1.2]{Schatzle01}).

Recently, Bellettini \cite{Bellettini23} provides a significantly different aspect to tackle the prescribed mean curvature problem.
Precisely, as shown by Sch\"atzle, the boundary cancellation is due to the fact that parts of $M_l$ meet might have opposite normals, and is strongly related to the density of the limit weak solution.
Therefore comes the sharp observation by Bellettini, that a much more suitable weak solution to work with, is the oriented integral $n$-varifold introduced by Hutchinson \cite{Hutchinson86}, as the phenomenon described above is exactly involved in the definition of oriented integral $n$-varifolds, see \eqref{defn:V-OIV} below.

To show compactness in the context of oriented integral-$n$ varifolds with the prescribed mean curvature property preserved in the subsequential limit, instead of using Sobolev embeddings as in \cite{Schatzle01}, a special subclass of oriented integral $n$-varifolds, called \textit{oriented integral curvature varifolds}, is studied in \cite{Bellettini23}.
In this subclass, varifolds are equipped with \textit{weak second fundamental form}, and we note that this idea goes back to Hutchinson's definition of (unoriented) curvature varifolds \cite{Hutchinson86}.

With the help of some useful Geometric Measure Theory tools, for example, the criterion of second order rectifiability for integral varifolds by Menne \cite{Menne13} (see also Santilli's result for general varifolds \cite{Santilli21}), Bellettini shows that oriented integral curvature varifolds, in the co-dimension-$1$ case, posses nice properties that $C^2$-hypersurfaces are supposed to have.
By virtue of this, the prescribed mean curvature problem is solved (\cite[Theorem 1.2]{Bellettini23}).

In view of the above results, it would be interesting to ask, if assuming a hypersurface has non-empty boundary, then whether we could prescribe the mean curvature of it, and at the same time the boundary of it, in the sense that its boundary lies entirely in a given supporting hypersurface, with boundary contact angle function also prescribed by an ambient function (see \eqref{condi:capillary-angle} below).
In fact, hypersurfaces with these properties are known as \textit{capillary hypersurfaces}, which are critical points of the free energy functional and hence are natural objects to be studied.
We also use the notation \textit{$\beta$-capillary hypersurface} when we wish to emphasize that the angle prescribing function is given by $\beta$.
For a historical overview of the study of capillary surfaces, we refer to a nice survey by Finn \cite{Finn99} and also to his book \cite{Finn86}.

The main focus of the paper is to give an affirmative answer to this question, and for the problem described above, we call it the \textit{capillary prescribed mean curvature problem}.
Inspired by Bellettini's results on oriented integral curvature varifolds \cite{Bellettini23}, and by the recent work of Wang-Zhang \cite{WZ}, in which a new Neumann boundary condition for general varifolds, together with the definition of unoriented curvature varifolds with capillary boundary, is introduced.
In this paper, we will study in the co-dimension-$1$ case the oriented integral curvature varifold with capillary boundary, and use it to solve the capillary prescribed mean curvature problem.
\subsection{Capillary boundary condition}

To suitably define the capillary boundary condition for oriented curvature varifolds, we first need to introduce the following.

\begin{definition}[Oriented capillary bundle]\label{Defn:orien-bundle}
\normalfont
For a possibly unbounded domain $\Om\subset\mfR^{n+1}$ of class $C^1$ with boundary, let $\beta:S\to (0,\pi)$ be a $C^1$-function on $S$.
For any $x\in S$, we define $G^o_{n,\beta}(x)$ to be the collection of unit vectors $v\in\mfS^n$ satisfying
\eq{\label{eq:<v,nu^S>}
\abs{\left<v,\nu^S(x)\right>}
=\abs{\cos\beta(x)}.
}

By collecting all such $G^o_{n,\beta}(x)$ for $x\in S$, we obtain a subspace of $G^o_n(S)=S\times\mfS^n$, denoted by $G^o_{n,\beta}(S)$, endowed with the subspace topology.

For any $(x,v)\in G^o_{n,\beta}(S)$ we define a unit vector field
\eq{\label{defn:mfn^o}
\mfn^o(x,v)
\coloneqq\frac{v^\perp(\nu^S(x))}{\abs{v^\perp(\nu^S(x))}},
}
which is well-defined thanks to $\sin\beta>0$.
\end{definition}
As said, this definition is inspired by the unoriented version introduced in \cite{WZ}, which we record in Definition \ref{Defn:unorien-bundle} below.
The key point of the definition is that, $G^o_{n,\beta}(x)$ provides all possible choices of unit normal for any $\beta$-capillary hypersurfaces whose boundary contains the point $x$.
Fixing $(x,v)\in G^o_{n,\beta}(S)$ means that we fix a boundary point and a unit normal for the capillary hypersurface at this point, in this case
the vector $\mfn^o(x,v)$ gives exactly the inwards-pointing unit co-normal at the point $x$, associated with the choice of unit normal $v$.

With Definition \ref{Defn:orien-bundle} we can then define the measure-theoretic capillary boundary of an oriented curvature varifold to be simply a 
Radon measure on $G^o_{n,\beta}(S)$.
In fact, the precise definition is as follows.
Note however, to state it we unavoidably need to first introduce some necessary terminologies from Geometric Measure Theory, which we opt to collect in Section \ref{Sec:2}.

\begin{definition}[Oriented integral curvature varifolds with capillary boundary]\label{Defn:OICV}
\normalfont
Let $\Om\subset\mfR^{n+1}$ be a bounded domain of class $C^2$, $\beta\in C^1(S, (0,\pi))$.
Given $V$ an oriented integral $n$-varifold on $\overline\Om$, $\Gamma$ a Radon measure on $G^o_{n,\beta}(S)$.
We say that $V$ has curvature and prescribed contact angle $\beta$ with $S$ along $\Gamma$, if there exist real-valued $V$-measurable functions $W_{ia}$, for $i,a\in\{1,\ldots,n+1\}$, defined $V$-a.e., such that the following identity holds for any $\phi\in C_c^1(\mfR^{n+1}\times\mfR^{n+1})$
\eq{\label{eq:ICV-1stvariation-1}
\int\left(\left(\de_{ij}-v_iv_j\right)D_j\phi+W_{ia}D^\ast_a\phi-\left(W_{rr}v_i+W_{ri}v_r\right)\phi\right)\rd V
=-\int\phi\mfn_i^o\rd\Gamma.
}
Here $D_j$ denotes the partial derivative with respect to the variable $x_j$ in the first factor, and $D^\ast_a$ with respect to the variable $v_a$ in the second factor.
$V$ is said to have curvature in $L^p$ if $W_{ia}\in L^p(V)$ for all $i,a$.
\end{definition}
When $\Gamma=0$, this reduces to Bellettini's definition of oriented integral curvature varifolds.
One can easily check that this definition is satisfied by smooth capillary hypersurfaces, see Remark \ref{Rem:C^2-surface}.
The nice properties of the above curvature varifolds will be shown in Section \ref{Sec:3}, by virtue of which we can solve the capillary prescribed mean curvature problem.
\subsection{Main results}
Our first main result is as follows.
\begin{theorem}\label{Thm:main}
Let $\Om\subset\mfR^{n+1}$ be a bounded domain of class $C^2$, $S\coloneqq\p\Om$ with $\nu^S$ the inner unit normal to $\Om$ along $S$, $\beta\in C^1(S, (0,\pi))$.
For $l\in\mbN$, let $E_l\subset\Om$ be open subset with
$D_l\coloneqq\p E_l\cap\p\Om$
and $M_l\coloneqq\p E_l\cap\Om$ a $C^2$-hypersurface (embedded in $\mfR^{n+1}$), such that $M_l$ meets $S$ transversally along $\p M_l$ with contact angle prescribed by the function $\beta$ in the following sense: let $\nu_l$ denote the inner unit normal to $E_l$ along $\overline{M_l}$, then
\eq{\label{condi:capillary-angle}
\left<\nu_l(x),\nu^S(x)\right>=-\cos\beta(x),\quad\forall x\in\p M_l;
}
 let $g_l,g:\overline\Om\ra\mfR$ be functions in $C^0(\overline\Om)$ such that the mean curvature vector of $M_l$ is given by $g_l\nu_l$.
If
\eq{\label{condi:area-function}
\sup_{l\in\mbN}\{\mcH^n(M_l)\}<\infty;\quad
g_l\ra g\text{ in }C^0(\overline\Om);
}
for some Lebesgue measurable set $E\subset\Om$ (which will be proved to be a set of finite perimeter in $\mfR^{n+1}$ and we denote by $\p^\ast E$ its reduced boundary, by $\nu_E$ its measure-theoretic inwards pointing unit normal.
We also write $D=\p^\ast E\cap\p\Om$, which will also be proved to be a set of finite perimeter in $S$) such that,
\eq{\label{condi:set-convergence}
\chi_{E_l}\ra\chi_E\text{ in }L^1(\Om);\quad
\chi_{D_l}\ra\chi_{D}\text{ in }L^1(S);
}
for some $p>1$ fixed,
(Denote by $B^l$ the classical second fundamental form of $M_l$) there holds
\eq{\label{condi:curvature}
\sup_{l\in\mbN}\left\{\int_{M_l}\abs{B^l}^p\rd\mcH^n\right\}<\infty;
}
and (Denote by $H^l_\p$ the classical mean curvature of $\p M_l$ as a hypersurface in $S$)
\eq{\label{condi:mean-curvature}
\sup_{l\in\mbN}\left\{\int_{\p M_l}\abs{H_\p^l}\rd\mcH^{n-1}\right\}<\infty.
}
Then there exists an oriented integral $n$-varifold $V$, a Radon measure $\Gamma$ on $G^o_{n,\beta}(S)$, to which $V_l=\mfv(M_l,\nu_l,1,0)$ and $\Gamma_l=\mcH^{n-1}\llcorner\p M_l\otimes\de_{\nu_l(x)}$ subsequentially converges to, such that
\begin{enumerate}
    \item [(i)] $V$ has curvature $W_{ia}\in L^p(V)$ for $i,a\in\{1,\ldots,n+1\}$ and prescribed contact angle $\beta$ with $S$ along $\Gamma$ in the sense of Definition \ref{Defn:OICV}.
    \item [(ii)] $V=\mfv(\p^\ast E\cap\Om,\nu_E,1,0)+\mfv(\mcR,\nu,\theta,\theta)$, where $\mcR$ is $n$-rectifiable on $\overline\Om$; $\nu$ is any measurable choice of unit normal on $\mcR$; $\theta:\mcR\ra\mbN\setminus\{0\}$ is in $L^1(\mcH^n\llcorner\mcR)$; and $\mcR\subset\{x\in\overline\Om:g(x)=0\}$.
    Moreover, $\norm{V}$ is $n$-rectifiable of class $C^2$.
    \item [(iii)] There exists an $(n-1)$-rectifiable set $R_S\subset S$ and a positive integer-valued integrable Borel function $\tau:R_S\ra\mfR$ such that
    \eq{
    \Gamma=\norm{\Gamma}\otimes\Gamma^x
    =\tau\mcH^{n-1}\llcorner R_S\otimes\Gamma^x,
    }
    where $\Gamma^x$ is a Radon probability measure resulting from disintegration.
    In particular,
    \eq{\label{impli:partialD-Gamma}
    \p^\ast D\subset R_S.
    }
    Moreover, for $\Gamma$-a.e. $(x,v)$, there holds
\eq{\label{eq:bdry-contact-weaksense}
\left<\mfn^o(x,v),\nu^S(x)\right>=\sin\beta(x),
}
where $\mfn^o$ is the unit vector field defined as \eqref{defn:mfn^o}.
    \item [(iv)] The first variation takes the form
    \eq{
    \de(\mfq_\#V)=\vec H\norm{V}-\mfn\mfq_\#\Gamma,
    }
    where $\mfq$ is the projection map from the oriented Grassmannian bundle to the unoriented Grassmannian bundle, $\mfn$ is the unit vector field defined as \eqref{defn:mfn}, and
    \eq{
    \vec H(x)
    =\begin{cases}
         g(x)\nu_E(x),\quad&\text{on }(\p^\ast E\cap\Om)\setminus\mcR,\\
         0\quad&\text{on }\mcR.
    \end{cases}
    }
    
\end{enumerate}
\end{theorem}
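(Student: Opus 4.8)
The plan is to argue by compactness, in four steps: (1) an a priori bound on the mass of the contact lines; (2) extraction of subsequential limits of the oriented varifolds and of the boundary measures, with passage to the limit in \eqref{eq:ICV-1stvariation-1}; (3) structure of the limit via second-order rectifiability; (4) identification of the visible/hidden decomposition and of the boundary datum. For (1): since $\beta\in C^1(S,(0,\pi))$ and $S=\p\Om$ is compact, $\sin\beta\ge c_0>0$; extending $\nu^S$ to $X\in C^1(\overline\Om;\mfR^{n+1})$ and testing the classical first variation of the $C^2$ capillary hypersurface $M_l$ against $X$, using that its inward unit conormal $\mfn_l$ satisfies $\mfn^o(\cdot,\nu_l)=\mfn_l$ and $\langle\mfn_l,\nu^S\rangle=\sin\beta$ by \eqref{condi:capillary-angle}, gives
\[
\int_{\p M_l}\sin\beta\,\rd\mcH^{n-1}=-\int_{M_l}\bigl(\div_{M_l}X+g_l\langle\nu_l,X\rangle\bigr)\rd\mcH^n ,
\]
whose right side is bounded in modulus by $C\bigl(\norm{X}_{C^1}+\sup_l\norm{g_l}_{C^0}\bigr)\sup_l\mcH^n(M_l)<\infty$ by \eqref{condi:area-function}. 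Hence $\sup_l\mcH^{n-1}(\p M_l)<\infty$, so $\sup_l P(E_l;\mfR^{n+1})<\infty$, and with \eqref{condi:set-convergence} the set $E$ has finite perimeter in $\mfR^{n+1}$; an analogous estimate using \eqref{condi:mean-curvature} shows $D=\p^\ast E\cap S$ has finite perimeter in $S$.

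For (2): with the boundary mass bound in hand, \eqref{condi:area-function} and \eqref{condi:curvature} bring us within the scope of the compactness theorem for oriented integral curvature varifolds with $L^p$ curvature --- the oriented version \cite{Bellettini23} of Hutchinson's theorem \cite{Hutchinson86} --- so along a subsequence $V_l\to V$ as oriented integral $n$-varifolds, $W^l_{ia}\rightharpoonup W_{ia}$ in $L^p$, and $V$ carries weak curvature $W_{ia}\in L^p(V)$. At the same time $\sup_l\Gamma_l(G^o_{n,\beta}(S))=\sup_l\mcH^{n-1}(\p M_l)<\infty$, so by weak-$\ast$ compactness of Radon measures a further subsequence gives $\Gamma_l\to\Gamma$, with $\operatorname{supp}\Gamma\subset G^o_{n,\beta}(S)$. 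Each $V_l$ satisfies \eqref{eq:ICV-1stvariation-1} with $\Gamma_l$ (Remark \ref{Rem:C^2-surface}); passing to the limit --- the divergence term by varifold convergence, the $W$-terms by the joint convergence above exactly as in Hutchinson's argument, the right side since $\mfn^o_i$ is continuous on $G^o_{n,\beta}(S)$ --- yields \eqref{eq:ICV-1stvariation-1} for $(V,\Gamma)$, i.e.\ (i).

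For (3)--(4): $V$ has curvature in $L^p$ and the boundary term in \eqref{eq:ICV-1stvariation-1} is concentrated on $S$, hence irrelevant to interior regularity, so the second-order rectifiability criterion of Menne \cite{Menne13} (or Santilli \cite{Santilli21}), along the lines of \cite{Bellettini23}, gives $\|V\|$ $n$-rectifiable of class $C^2$. Writing the orientation of $V$ through signed multiplicities $\theta^\pm$ $\|V\|$-a.e., the difference $\theta^+-\theta^-$ is pinned down by $\chi_{E_l}\to\chi_E$ in $L^1$ (recall $V_l=\mfv(M_l,\nu_l,1,0)$, $M_l=\p E_l$) to recover $\p^\ast E$ with normal $\nu_E$; setting $\mcR:=\{\theta^+=\theta^-\}$, $\theta:=\theta^+$ there, one obtains $V=\mfv(\p^\ast E\cap\Om,\nu_E,1,0)+\mfv(\mcR,\nu,\theta,\theta)$. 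That $\mcR\subset\{g=0\}$: $g_l\to g$ in $C^0$ makes $\vec H=g\nu_E$ the $\|V\|$-absolutely continuous part of $\de(\mfq_\#V)$ on the visible part, while on $\mcR$ the oppositely oriented sheets contribute $g\theta\nu$ and $-g\theta\nu$, which cancel, forcing $g=0$ $\mcH^n$-a.e.\ on $\mcR$; this is (ii), and projecting \eqref{eq:ICV-1stvariation-1} under $\mfq$ with $v$-independent test functions (which kill $D^\ast_a\phi$) and reading off the resulting mean curvature vector yields (iv). For (iii), $\mfq_\#\Gamma_l=\mcH^{n-1}\llcorner\p M_l$ is an integral $(n-1)$-varifold in $S$ with first variation bounded via \eqref{condi:mean-curvature}, so standard compactness and rectifiability for integral varifolds give $\|\mfq_\#\Gamma\|=\tau\mcH^{n-1}\llcorner R_S$ with $\tau$ integer-valued, whence the disintegrated form; comparing the perimeter measures $\abs{D\chi_{D_l}}=\mcH^{n-1}\llcorner\p M_l$ with $\|\Gamma_l\|$ and using lower semicontinuity gives $\abs{D\chi_D}\le\|\mfq_\#\Gamma\|$, hence $\p^\ast D\subset R_S$. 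Finally \eqref{eq:bdry-contact-weaksense} holds for each $\Gamma_l$ (as $\mfn^o(x,\nu_l(x))=\mfn_l(x)$ and $\langle\mfn_l,\nu^S\rangle=\sin\beta$), and since $(x,v)\mapsto\langle\mfn^o(x,v),\nu^S(x)\rangle-\sin\beta(x)$ is continuous and vanishes on $\operatorname{supp}\Gamma_l$ for all $l$, it vanishes $\Gamma$-a.e.

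The hard part will be carrying (3)--(4) out in the presence of the boundary: proving the $C^2$-rectifiability and the visible/hidden splitting together with $\mcR\subset\{g=0\}$, and simultaneously showing that $\Gamma$ is a genuine capillary boundary datum --- supported on $G^o_{n,\beta}(S)$, obeying \eqref{eq:bdry-contact-weaksense}, with $\p^\ast D\subset R_S$ --- with no mass of $\Gamma$ leaking into the interior or being spuriously created in the limit. The interior portion is essentially Bellettini's; the new difficulty is controlling the degeneration of the contact lines $\p M_l$ and extracting the $(n-1)$-rectifiable structure of $\|\Gamma\|$ (and matching it with $\p^\ast D$), which is precisely where the uniform bound \eqref{condi:mean-curvature} is indispensable, and is the most delicate point.
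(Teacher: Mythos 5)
Your skeleton matches the paper's proof almost step for step: the a priori bound $\sup_l\mcH^{n-1}(\p M_l)<\infty$ by testing the classical first variation against an extension of $\nu^S$ (this is the paper's Lemma \ref{Lem:compare-mass}, and your version using $\|g_l\|_{C^0}$ is fine), Hutchinson's compactness (Theorem \ref{Thm:Hutchinson-cpt}) after verifying mass, first-variation and boundary-current bounds, weak-$\ast$ compactness of $\Gamma_l$, passage to the limit in \eqref{eq:ICV-1stvariation-1} via the limit of $\vec W^lV_l$, Menne's criterion for $C^2$-rectifiability, Allard compactness for the boundary $(n-1)$-varifolds of $\p M_l$ in $S$ using \eqref{condi:mean-curvature}, lower semicontinuity of perimeter for \eqref{impli:partialD-Gamma}, and continuity on $G^o_{n,\beta}(S)$ for \eqref{eq:bdry-contact-weaksense}.

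There is, however, one genuine gap, at the heart of (ii) and (iv): your justification of $\mcR\subset\{g=0\}$. You assert that on $\mcR$ the oppositely oriented sheets contribute $g\theta\nu$ and $-g\theta\nu$, ``which cancel, forcing $g=0$.'' The cancellation only shows that the distributional mean curvature of $\mfq_\#V$ is $0$ (or reduced by the factor $(\theta_1-\theta_2)/(\theta_1+\theta_2)$, cf.\ \eqref{defn:H_i}) on the hidden part; by itself this is perfectly compatible with $g\neq0$ there, and indeed such cancellation is exactly the mechanism that produces hidden boundary, so the sentence assumes what must be proved. The paper's actual mechanism is a parity argument: passing to the limit both \eqref{eq:ICV-1stvariation-1} and the version with $-(W^l_{rr}v_i+W^l_{ri}v_r)=g_lv_i$ substituted gives $-(W_{rr}v_i+W_{ri}v_r)=gv_i$ $V$-a.e.; once $\norm{V}$ is $C^2$-rectifiable and $\norm{V}$-a.e.\ point lies outside ${\rm spt}\norm{\Gamma}$, the oddness/uniqueness of $W_{ia}$ (Proposition \ref{Prop:W_ia-propperties}(a) together with Remark \ref{Rem:even-lift}) forces the pointwise identity $H_i(x)=g(x)\xi_i(x)$, which compared with the weighted average \eqref{defn:H_i} yields $\theta_2=0$ wherever $g\neq0$, i.e.\ $\mcR\subset\{g=0\}$ up to an $\mcH^n$-null set. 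You do defer ``the interior portion'' to \cite{Bellettini23}, but as written the cancellation sentence replaces precisely this step. Relatedly, your claim that the boundary term is ``concentrated on $S$, hence irrelevant to interior regularity'' is too quick: a priori $\norm{V}$ may charge $S$, so to invoke Bellettini's interior parity results one needs $x\notin{\rm spt}\norm{\Gamma}$ for $\norm{V}$-a.e.\ $x$; in the paper this is supplied by the $(n-1)$-rectifiability with integer multiplicity of $\norm{\Gamma}$, obtained via Allard compactness for the boundary varifolds — which is the real role of \eqref{condi:mean-curvature} beyond conclusion (iii) — and this localization should be made explicit in your step (3).
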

Note that the set $\mcR$ in conclusion ($ii$) is exactly the hidden boundary.
A direct consequence is, if the nodal set of $g$ is $\mcH^n$-negligible, then there is no hidden boundary (with respect to $\mcH^n$) when taking limit.
\begin{corollary}\label{Cor:negligible-nodal-set}
Under the assumptions and notations in Theorem \ref{Thm:main}.
If
\eq{
\mcH^n(\{x\in\overline\Om:g(x)=0\})=0,
}
then the conclusions of Theorem \ref{Thm:main} hold and ($ii$), ($iv$) read as follows.
\begin{enumerate}
    \item [(ii')] $V=\mfv(\p^\ast E\cap\Om,\nu_E,1,0)$ and $\mcH^{n}\llcorner(\p^\ast E\cap\Om)$ is $n$-rectifiable of class $C^2$.
    \item [(iv')] The first variation takes the form
    \eq{
    \de(\mfq_\#V)=\vec H\mcH^n\llcorner(\p^\ast E\cap\Om)-\mfn\mfq_\#\Gamma,
    }
    where $\mfq$ is the projection map from the oriented Grassmannian bundle to the unoriented Grassmannian bundle, $\mfn$ is the unit vector field defined as \eqref{defn:mfn}, and
    \eq{
    \vec H(x)=g(x)\nu_E(x),\quad x\in\p^\ast E\cap\Om.
    }
\end{enumerate}
\end{corollary}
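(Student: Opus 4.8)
The plan is to obtain the Corollary as a direct specialization of Theorem~\ref{Thm:main}: the extra hypothesis that the nodal set of $g$ is $\mcH^n$-negligible forces the hidden-boundary piece $\mcR$ to carry no mass, whereupon conclusions ($ii$) and ($iv$) collapse to ($ii'$) and ($iv'$), while all the remaining conclusions of Theorem~\ref{Thm:main} are left unchanged.

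First I would invoke Theorem~\ref{Thm:main} verbatim, producing the oriented integral $n$-varifold $V$, the Radon measure $\Gamma$ on $G^o_{n,\beta}(S)$, and the decomposition $V=\mfv(\p^\ast E\cap\Om,\nu_E,1,0)+\mfv(\mcR,\nu,\theta,\theta)$ with $\mcR\subset\{x\in\overline\Om:g(x)=0\}$ an $n$-rectifiable set and $\theta\in L^1(\mcH^n\llcorner\mcR)$. Since $\mcR$ is contained in a set of vanishing $\mcH^n$-measure, monotonicity of $\mcH^n$ gives $\mcH^n(\mcR)=0$, hence $\mcH^n\llcorner\mcR=0$, and therefore $\mfv(\mcR,\nu,\theta,\theta)$ is the zero varifold, because an oriented integral varifold carried by an $\mcH^n$-null set is trivial (the integrability of $\theta$ being then automatic). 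Thus $V=\mfv(\p^\ast E\cap\Om,\nu_E,1,0)$, which is the first half of ($ii'$), and consequently $\norm{V}=\mcH^n\llcorner(\p^\ast E\cap\Om)$; since conclusion ($ii$) asserts that $\norm{V}$ is $n$-rectifiable of class $C^2$, this yields the second half of ($ii'$).

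Finally, for ($iv'$) I would start from the first-variation identity of conclusion ($iv$), namely $\de(\mfq_\#V)=\vec H\norm{V}-\mfn\mfq_\#\Gamma$, substitute $\norm{V}=\mcH^n\llcorner(\p^\ast E\cap\Om)$, and observe that, $\mcR$ being $\mcH^n$-null, the sets $(\p^\ast E\cap\Om)\setminus\mcR$ and $\p^\ast E\cap\Om$ agree up to an $\mcH^n$-null set, so the piecewise definition of $\vec H$ in ($iv$) reduces $\mcH^n$-a.e.\ on $\p^\ast E\cap\Om$ to $\vec H(x)=g(x)\nu_E(x)$; this is precisely ($iv'$). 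I do not anticipate any genuine obstacle: the entire analytic substance — the compactness, the existence of the weak second fundamental form $W_{ia}\in L^p(V)$, the $C^2$-rectifiability, the structure of $\Gamma$, and the identification of the hidden boundary — is already packaged inside Theorem~\ref{Thm:main}, so the Corollary amounts to the bookkeeping remark that a varifold supported on a set of $\mcH^n$-measure zero vanishes, together with the substitution of $\norm{V}$ by $\mcH^n\llcorner(\p^\ast E\cap\Om)$.
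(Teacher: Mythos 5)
Your proposal is correct and matches the paper's intent exactly: the paper simply notes that the Corollary "follows directly from Theorem \ref{Thm:main}," and your argument—$\mcR\subset\{g=0\}$ forces $\mcH^n(\mcR)=0$, so the hidden-boundary piece $\mfv(\mcR,\nu,\theta,\theta)$ vanishes, whence $\norm{V}=\mcH^n\llcorner(\p^\ast E\cap\Om)$ and the formula for $\vec H$ collapses to $g\nu_E$—is precisely the bookkeeping the author has in mind.
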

Observe that, as revealed by the implication \eqref{impli:partialD-Gamma}, there is hidden boundary in the limit with respect to $\mcH^{n-1}$.
On one hand,
an easy way to solve this issue is to impose the convergence of perimeters, which intuitively should forbid any chances of boundary cancellation in the limit.
In fact, we have the following.

\begin{corollary}\label{Cor:converges-perimeter}
Under the assumptions and notations in Theorem \ref{Thm:main}.
If \eqref{condi:mean-curvature} is replaced by
\eq{\label{condi:perimeter-converge}
\mcH^{n-1}(\p M_l)
=\mcH^{n-1}(\p^\ast D_l)\ra \mcH^{n-1}(\p^\ast D)\text{ as }l\ra\infty,
}
then the conclusions of Theorem \ref{Thm:main} still hold and ($iii$) improves as follows.
\begin{enumerate}
    \item [(iii')] The Radon measure $\Gamma$ takes the form
    \eq{
    \Gamma=\norm{\Gamma}\otimes\Gamma^x
    =\mcH^{n-1}\llcorner(\p^\ast D)\otimes\Gamma^x,
    }
    where $\Gamma^x$ is a Radon probability measure resulting from disintegration.
    Moreover, for $\Gamma$-a.e. $(x,v)$, there holds
\eq{
\left<\mfn^o(x,v),\nu^S(x)\right>=\sin\beta(x),
}
where $\mfn^o$ is the unit vector field defined as in \eqref{defn:mfn^o}.
\end{enumerate}
\end{corollary}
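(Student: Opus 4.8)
The plan is to re-run the proof of Theorem \ref{Thm:main}, noting that hypothesis \eqref{condi:mean-curvature} enters it in only two spots: it supplies the uniform mass bound $\sup_l\norm{\Gamma_l}(G^o_{n,\beta}(S))<\infty$ needed to extract a weak-$\ast$ convergent subsequence of the boundary measures, and it is what makes the base of the limit $\Gamma$ a rectifiable measure with integer multiplicity in conclusion (iii); the compactness of the $V_l$ and conclusions (i), (ii), (iv) do not rely on it. Under \eqref{condi:perimeter-converge} the mass bound is immediate, since $\norm{\Gamma_l}(G^o_{n,\beta}(S))=\mcH^{n-1}(\p M_l)=\mcH^{n-1}(\p^\ast D_l)$ is a convergent, hence bounded, sequence; so one extracts a subsequence along which $V_l\to V$, $\Gamma_l\to\Gamma$ with (i), (ii), (iv) holding verbatim. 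For the structure of $\Gamma$ I would not repeat the argument giving (iii), but instead establish the sharper (iii') directly, which of course implies (iii).

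To identify $\Gamma$, I would first introduce the projection $\pi\colon G^o_{n,\beta}(S)\to S$, $(x,v)\mapsto x$. Since the $v$-fibre of $\pi$ is a closed, hence compact, subset of $\mfS^n$, the map $\pi$ is proper, so $f\circ\pi\in C_c(G^o_{n,\beta}(S))$ for every $f\in C_c(S)$; consequently the weak-$\ast$ convergence $\Gamma_l\to\Gamma$ pushes forward to $\pi_\#\Gamma_l\to\pi_\#\Gamma$ as Radon measures on $S$. On the one hand, $\pi_\#\Gamma_l=\pi_\#\bigl(\mcH^{n-1}\llcorner\p M_l\otimes\de_{\nu_l(x)}\bigr)=\mcH^{n-1}\llcorner\p M_l=\mcH^{n-1}\llcorner\p^\ast D_l$; on the other hand, from the $L^1(S)$-convergence $\chi_{D_l}\to\chi_D$ of \eqref{condi:set-convergence} together with the convergence of perimeters \eqref{condi:perimeter-converge}, the standard fact that $L^1$-convergence of sets plus convergence of their finite perimeters upgrades to weak-$\ast$ convergence of the associated Gauss--Green measures — lower semicontinuity of perimeter gives $\liminf_l\mcH^{n-1}(\p^\ast D_l\cap A)\ge\mcH^{n-1}(\p^\ast D\cap A)$ on open $A\subset S$, and convergence of total masses forces equality in the limit — yields $\mcH^{n-1}\llcorner\p^\ast D_l\to\mcH^{n-1}\llcorner\p^\ast D$ weak-$\ast$. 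By uniqueness of weak-$\ast$ limits, $\pi_\#\Gamma=\mcH^{n-1}\llcorner\p^\ast D$.

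Disintegrating $\Gamma$ over its projection then gives $\Gamma=\mcH^{n-1}\llcorner\p^\ast D\otimes\Gamma^x$ with $\Gamma^x$ a Radon probability measure for $\mcH^{n-1}$-a.e.\ $x\in\p^\ast D$ — i.e.\ (iii'), with $R_S=\p^\ast D$ and multiplicity $\tau\equiv1$ — and a fortiori conclusion (iii) of Theorem \ref{Thm:main}. The contact-angle relation requires no extra work: by \eqref{defn:mfn^o} and the defining condition of Definition \ref{Defn:orien-bundle}, for \emph{every} $(x,v)\in G^o_{n,\beta}(S)$ one computes $\left<\mfn^o(x,v),\nu^S(x)\right>=\abs{v^\perp(\nu^S(x))}=\sqrt{1-\cos^2\beta(x)}=\sin\beta(x)$ (using $\beta(x)\in(0,\pi)$), and $\Gamma$ is supported on that closed set, so the relation holds $\Gamma$-a.e. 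The one point deserving genuine care — and the main obstacle — is verifying that \eqref{condi:mean-curvature} is nowhere else tacitly used in the proof of Theorem \ref{Thm:main}, and that no boundary mass escapes into the $v$-fibre when passing from $\pi_\#\Gamma_l$ to $\pi_\#\Gamma$, which is precisely what the properness of $\pi$ guarantees.
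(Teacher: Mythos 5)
Your proof is correct and follows essentially the same route as the paper: both identify $\norm{\Gamma}$ by using \eqref{condi:perimeter-converge} to upgrade the lower semicontinuity coming from $\chi_{D_l}\to\chi_D$ to weak-star convergence $\mcH^{n-1}\llcorner(\p^\ast D_l)\to\mcH^{n-1}\llcorner(\p^\ast D)$ (the paper invokes \cite[Proposition 4.30(ii)]{Mag12} for the Gauss--Green measures, you argue directly via open sets plus convergence of total masses), and then match this limit with $\lim_l\norm{\Gamma_l}=\lim_l\pi_\#\Gamma_l=\norm{\Gamma}$, the contact-angle identity being automatic from \eqref{eq:<n^o,nu^S>=sinbeta}. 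One bookkeeping remark: in the paper the uniform bound on $\mcH^{n-1}(\p M_l)$ comes from Lemma \ref{Lem:compare-mass} rather than from \eqref{condi:mean-curvature}, and conclusions ($ii$), ($iv$) do use the $(n-1)$-rectifiability of $\norm{\Gamma}$ through \eqref{eq:norm(V)(S)=0} -- but your (iii') supplies exactly that, so the argument still closes.
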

On the other hand, partly motivated by this observation we wonder, whether one could prescribe the mean curvature of the boundary of a capillary hypersurface $\p M_j$, as a hypersurface in $S$, by an ambient function $g^S_j\in C^0(S)$, which converges as continuous function to $g^S\in C^0(S)$.
We call this a \textit{capillary prescribed boundary mean curvature problem}.

To solve this problem, thanks to the co-dimension-$1$ essence of $\p M_j\subset S$, once we view $S$ as a Riemannian manifold embedded in $\mfR^{n+1}$, the prescribed mean curvature problem of $\p M_j$ in $S$ can be then tackled in the context of oriented integral curvature varifolds on Riemannian manifolds, which is completely solved by Bellettini, see \cite[Section 5]{Bellettini23}.

Combining \cite[Theorem 4]{Bellettini23} with a general compactness result which is essentially shown in the proof of Theorem \ref{Thm:main} (see Theorem \ref{Thm:general-cpt}), the \textit{capillary prescribed boundary mean curvature problem} can be then solved.
Here we point out that, to use Bellettini's results, in contrast to the uniform $L^1$-mean curvature bounds \eqref{condi:mean-curvature} in Theorem \ref{Thm:main}, we need a stronger uniform $L^q$-curvature (Second fundamental form of $\p M_l\subset S$) bounds for some $q>1$.

Our second main result is as follows.

\begin{theorem}\label{Thm:double}
Let $\Om\subset\mfR^{n+1}$ be a bounded domain of class $C^2$, $S\coloneqq\p\Om$ with $\nu^S$ the inner unit normal to $\Om$ along $S$, $\beta\in C^1(S, (0,\pi))$.
For $l\in\mbN$, let $E_l\subset\Om$ be open subset with
$D_l\coloneqq\p E_l\cap\p\Om$
and $M_l\coloneqq\p E_l\cap\Om$ a $C^2$-hypersurface (embedded in $\mfR^{n+1}$), such that $M_l$ meets $S$ transversally along $\p M_l$ with contact angle prescribed by the function $\beta$ in the following sense: let $\nu_l$ denote the inner unit normal to $E_l$ along $\overline{M_l}$, then
\eq{
\left<\nu_l(x),\nu^S(x)\right>=-\cos\beta(x),\quad\forall x\in\p M_l.
}
Let $\bar\nu_l$ denote the inwards pointing unit normal to $D_l$ along $\p M_l$, and let $g^S_l,g^S:S\ra\mfR$ be functions in $C^0(S)$ such that the mean curvature vector of $\p M_l\subset S$ is given by $g^S_l\bar\nu_l$.
If
\eq{\label{condi:area-function-bdry}
\sup_{l\in\mbN}\{\mcH^n(\p M_l)\}<\infty;\quad
g^S_l\ra g^S\text{ in }C^0(S);
}
for some $\mcH^{n-1}$-measurable set $D\subset S$ (which is in fact a set of finite perimeter in $S$ and we denote by $\p^\ast D$ its reduced boundary, and by $\nu_D$ its measure-theoretic inwards pointing unit normal) there holds
\eq{\label{condi:set-convergence-bdry}
\chi_{D_l}\ra\chi_{D}\text{ in }L^1(S);
}
for some $p>1$ fixed,
(Denote by $B^l$ the classical second fundamental form of $M_l$) there holds
\eq{\label{condi:curvature-2}
\sup_{l\in\mbN}\left\{\int_{M_l}\abs{B^l}^p\rd\mcH^n\right\}<\infty;
}
and for some $q>1$ fixed (Denote by $B^l_\p$ the classical second fundamental form of $\p M_l$ as a hypersurface in $S$)
\eq{\label{condi:bdry-curvature}
\sup_{l\in\mbN}\left\{\int_{\p M_l}\abs{B_\p^l}^q\rd\mcH^{n-1}\right\}<\infty.
}
Then there exists an oriented integral $n$-varifold $V$, an oriented integral $(n-1)$-varifold $\Gamma^S$ (on the Riemannian manifold $S$), a Radon measure $\Gamma$ on $G^o_{n,\beta}(S)$, to which $V_l=\mfv(M_l,\nu_l,1,0)$, $\Gamma^S_l=\mfv(\p M_l,\bar\nu_l,1,0)$, and $\Gamma_l=\mcH^{n-1}\llcorner\p M_l\otimes\de_{\nu_l(x)}$ subsequentially converges to, such that
\begin{enumerate}
    \item [(i)] $V$ has curvature $W_{ia}\in L^p(V)$ for $i,a\in\{1,\ldots,n+1\}$ and prescribed contact angle $\beta$ with $S$ along $\Gamma$ in the sense of Definition \ref{Defn:OICV}.
    \item [(ii)] $\Gamma^S=\mfv(\p^\ast D,\nu_D,1,0)+\mfv(\mcR_S,\bar\nu,\theta_S,\theta_S)$, where $\mcR_S$ is $(n-1)$-rectifiable on $S$; $\bar\nu$ is any measurable choice of unit normal on $\mcR_S\subset S$; $\theta_S:\mcR_S\ra\mbN\setminus\{0\}$ is in $L^1(\mcH^{n-1}\llcorner\mcR_S)$; $\mcR_S\subset\{x\in S:g^S(x)=0\}$.
    Moreover, $\norm{\Gamma^S}$ is $(n-1)$-rectifiable of class $C^2$.
    \item [(iii)]
    The Radon measure $\Gamma$ takes the form
    \eq{
    \Gamma=\norm{\Gamma}\otimes\Gamma^x=\norm{\Gamma^S}\otimes\Gamma^x,
    }
    where $\Gamma^x$ is a Radon probability measure resulting.
    In particular, for $\Gamma$-a.e. $(x,v)$, there holds
\eq{\label{eq:bdry-contact-weaksense-2}
\left<\mfn^o(x,v),\nu^S(x)\right>=\sin\beta(x),
}
where $\mfn^o$ is the unit vector field defined as \eqref{defn:mfn^o}.
    \item [(iv)]
    In terms of tangential vector field to $S$, the first variation takes the form
    \eq{
    \de(\mfq_\#\Gamma^S)
    =\vec H_\p\norm{\Gamma},
    }
    with
    \eq{
    \vec H_\p(x)
    =\begin{cases}
         g^S(x)\nu_D(x),\quad&\text{on }(\p^\ast D)\setminus\mcR_S,\\
         0\quad&\text{on }\mcR_S.
    \end{cases}
    }
\end{enumerate}
\end{theorem}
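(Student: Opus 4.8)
The plan is to run two compactness arguments in parallel: the one underlying Theorem~\ref{Thm:main} for the hypersurfaces $M_l$ inside $\Om$, and Bellettini's resolution of the prescribed mean curvature problem on Riemannian manifolds for the hypersurfaces $\p M_l$ inside the compact $C^2$ manifold $S=\p\Om$; afterwards one reconciles the two resulting limit objects on their common base measure over $S$.

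\textbf{Step 1: the capillary interior limit.} First observe that the uniform $L^q$ bound \eqref{condi:bdry-curvature} subsumes the $L^1$-bound \eqref{condi:mean-curvature}: since $\abs{H_\p^l}\le\sqrt{n-1}\,\abs{B_\p^l}$ pointwise on $\p M_l$ and $\mcH^{n-1}(\p M_l)$ is uniformly bounded by \eqref{condi:area-function-bdry}, Hölder's inequality gives
\eq{
\int_{\p M_l}\abs{H_\p^l}\rd\mcH^{n-1}
\le\sqrt{n-1}\left(\int_{\p M_l}\abs{B_\p^l}^q\rd\mcH^{n-1}\right)^{1/q}\mcH^{n-1}(\p M_l)^{1-1/q}.
}
Together with the uniform mass bound in \eqref{condi:area-function-bdry}, the uniform $L^p$ second fundamental form bound \eqref{condi:curvature-2} and the convergence \eqref{condi:set-convergence-bdry}, this places us in the hypotheses of the general compactness statement extracted from the proof of Theorem~\ref{Thm:main}, namely Theorem~\ref{Thm:general-cpt}. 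Applying it along a subsequence (not relabelled), $V_l=\mfv(M_l,\nu_l,1,0)$ converges to an oriented integral $n$-varifold $V$ and $\Gamma_l=\mcH^{n-1}\llcorner\p M_l\otimes\de_{\nu_l(x)}$ converges to a Radon measure $\Gamma$ carried by $G^o_{n,\beta}(S)$, with $V$ having curvature $W_{ia}\in L^p(V)$ and prescribed contact angle $\beta$ with $S$ along $\Gamma$ in the sense of Definition~\ref{Defn:OICV} (this is conclusion (i)) and $\Gamma$ disintegrating as $\Gamma=\norm{\Gamma}\otimes\Gamma^x$ with $\norm{\Gamma}=\tau\,\mcH^{n-1}\llcorner R_S$ over an $(n-1)$-rectifiable set $R_S\subset S$ and a positive integer-valued $\tau$. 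The angle identity \eqref{eq:bdry-contact-weaksense-2} is then immediate from \eqref{defn:mfn^o}: on $G^o_{n,\beta}(S)$ one has $\abs{v^\perp(\nu^S(x))}=(1-\cos^2\beta(x))^{1/2}=\sin\beta(x)$, hence $\left<\mfn^o(x,v),\nu^S(x)\right>=\abs{v^\perp(\nu^S(x))}=\sin\beta(x)$.

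\textbf{Step 2: the boundary limit on $S$.} Since each $M_l$ is a $C^2$ embedded hypersurface meeting $S$ transversally along $\p M_l$, the set $D_l=\p E_l\cap\p\Om$ has locally $C^2$ boundary in $S$, hence is a set of finite perimeter in $S$ with $\p^\ast D_l=\p M_l$; by hypothesis the mean curvature vector of $\p M_l$ in $S$ is $g^S_l\bar\nu_l$. By the mass bound in \eqref{condi:area-function-bdry}, the convergence \eqref{condi:set-convergence-bdry}, the uniform $L^q$ curvature bound \eqref{condi:bdry-curvature} and $g^S_l\to g^S$ in $C^0(S)$, the hypotheses of the prescribed mean curvature compactness theorem for oriented integral curvature varifolds on Riemannian manifolds, \cite[Theorem~4]{Bellettini23}, hold with ambient manifold $S$. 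Passing to a further subsequence, $\Gamma^S_l=\mfv(\p M_l,\bar\nu_l,1,0)$ converges to an oriented integral $(n-1)$-varifold $\Gamma^S$ on $S$, and \cite[Theorem~4]{Bellettini23} produces precisely the decomposition of conclusion (ii) — with $\mcR_S\subset\{x\in S:g^S(x)=0\}$ and $\norm{\Gamma^S}$ of class $C^2$ — together with the first variation formula of conclusion (iv), the first variation being taken over vector fields tangent to $S$.

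\textbf{Step 3: reconciling the two limits.} The decisive observation is that the weight of the oriented varifold $\Gamma^S_l$ equals $\mcH^{n-1}\llcorner\p M_l$, which is also the base ($S$-)marginal $\norm{\Gamma_l}$ of the capillary boundary measure $\Gamma_l$. Choosing the subsequences of Steps 1 and 2 to be one and the same, weak-$\ast$ convergence of $\Gamma_l$ (hence of its $S$-marginal) and of the varifolds $\Gamma^S_l$ (hence of their weights) forces
\eq{
\norm{\Gamma}=\lim_{l}\norm{\Gamma_l}=\lim_{l}\mcH^{n-1}\llcorner\p M_l=\lim_{l}\norm{\Gamma^S_l}=\norm{\Gamma^S}
}
as Radon measures on $S$. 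Hence the disintegration from Step~1 reads $\Gamma=\norm{\Gamma}\otimes\Gamma^x=\norm{\Gamma^S}\otimes\Gamma^x$, which is conclusion (iii), and substituting $\norm{\Gamma^S}=\norm{\Gamma}$ in the formula of Step~2 gives conclusion (iv) in the stated form; comparing the representations $\norm{\Gamma}=\tau\,\mcH^{n-1}\llcorner R_S$ and $\norm{\Gamma^S}=\mcH^{n-1}\llcorner\p^\ast D+\theta_S\,\mcH^{n-1}\llcorner\mcR_S$ further identifies $R_S$ with $\p^\ast D\cup\mcR_S$ up to $\mcH^{n-1}$-null sets and renders all structural data consistent. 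Since the a priori estimates are entirely inherited, the real content is concentrated here: one must verify that Bellettini's Riemannian machinery — in particular the oriented varifold compactness and the second-order rectifiability criterion — applies verbatim to the merely $C^2$ ambient manifold $S=\p\Om$ embedded in $\mfR^{n+1}$, and one must take care that the comparison above is carried out on the correct common object, since $\Gamma$ lives on the capillary bundle $G^o_{n,\beta}(S)\subset S\times\mfS^n$ while $\Gamma^S$ lives on the oriented Grassmannian bundle of $S$, and it is only their base measures over $S$ that coincide.
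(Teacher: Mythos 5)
Your Steps 2 and 3 follow the paper's proof essentially verbatim: Bellettini's Riemannian theorem \cite[Theorem 4]{Bellettini23} applied to $\p M_l\subset S$ gives (ii) and (iv), and the identification $\norm{\Gamma_l}=\mcH^{n-1}\llcorner\p M_l=\norm{\Gamma_l^S}$ along a common subsequence gives $\norm{\Gamma}=\norm{\Gamma^S}$, hence (iii), with the angle identity coming from \eqref{eq:<n^o,nu^S>=sinbeta}. The problem is in Step 1. Theorem \ref{Thm:general-cpt} requires the uniform \emph{interior} mass bound $\sup_l\mcH^n(M_l)<\infty$ in addition to \eqref{condi:curvature-2}, and this is \emph{not} among the hypotheses of Theorem \ref{Thm:double}: assumption \eqref{condi:area-function-bdry} only bounds the boundary mass $\mcH^{n-1}(\p M_l)$. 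You list the boundary mass bound, \eqref{condi:curvature-2} and \eqref{condi:set-convergence-bdry} and assert that "this places us in the hypotheses" of Theorem \ref{Thm:general-cpt}, but no interior area bound has been established, so the application (and with it Hutchinson's compactness, which needs mass control of $\norm{\mfq_\#V_l}$) is unjustified as written. The paper's proof begins precisely by closing this gap: since each $M_l$ is a smooth $\beta$-capillary hypersurface, Remark \ref{Rem:C^2-surface} puts $(V_l,\Gamma_l)$ in the setting of Definition \ref{Defn:OICV} with curvature coefficients controlled by $B^l$, and the first estimate of Lemma \ref{Lem:compare-mass} (test with the position vector field) yields $\mcH^n(M_l)\leq C\bigl(\mcH^{n-1}(\p M_l)+\norm{B^l}_{L^p(M_l)}^p\bigr)$, i.e.\ the bound \eqref{eq:bound-M_l}, from \eqref{condi:area-function-bdry} and \eqref{condi:curvature-2}. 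Your observation that \eqref{condi:bdry-curvature} subsumes the $L^1$ bound on $H^l_\p$ does not help here, since Theorem \ref{Thm:general-cpt} never uses boundary mean curvature; what is missing is the area of $M_l$ itself.

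A secondary, more minor point: you attribute to Theorem \ref{Thm:general-cpt} the conclusion that $\norm{\Gamma}=\tau\,\mcH^{n-1}\llcorner R_S$ with integer-valued $\tau$. That theorem asserts no such structure; in the proof of Theorem \ref{Thm:main} this came from a separate Allard compactness argument for the boundary varifolds using \eqref{condi:mean-curvature}. In the present theorem it is ultimately harmless, because the structure of $\norm{\Gamma}$ is recovered in your Step 3 from $\norm{\Gamma}=\norm{\Gamma^S}$ and Bellettini's description of $\Gamma^S$, but as stated in Step 1 it is an unsupported claim. With the mass estimate of Lemma \ref{Lem:compare-mass} inserted at the start, the rest of your argument matches the paper's proof.
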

In this case,
$\mcR_S$ is the hidden boundary, and for the same reason as in Corollary \ref{Cor:negligible-nodal-set}, we could exclude the hidden boundary provided that the nodal set of $g^S$ is $\mcH^{n-1}$-negligible. 
\subsection{Outline of the paper}
In Section \ref{Sec:2} we collect the necessary ingredients from Geometric Measure Theory.
In Section \ref{Sec:3} we prove the basic properties of oriented integral curvature varifolds with capillary boundary.
In Section \ref{Sec:4} we prove the main results in order.

\subsection{Acknowledgements}
The author thanks Prof. Ernst Kuwert and Prof. Guofang Wang for many insightful discussions.
\section{Preliminaries}\label{Sec:2}
We adopt the following basic notations throughout the paper.
\begin{itemize}
    \item $\mfR^{n+1}$ with $n\geq1$ denotes the Euclidean space, with the Euclidean scalar product denoted by $\left<\cdot,\cdot\right>$ and the corresponding Levi-Civita connection denoted by $\na$.
    When considering the topology of $\mfR^{n+1}$, we denote by $\overline{E}$ the topological closure of a set $E$,
    and by $\p E$ the topological boundary of $E$.
    Let $\{e_1,\ldots,e_{n+1}\}$ denote the canonical coordinate vectors, then $\mfR^{n+1}$ is oriented by the orientation $e_1\wedge\ldots\wedge e_{n+1}$.
    \item For $k\in\mbN$, $\mcH^k$ is the $k$-dimensional Hausdorff measure on $\mfR^{n+1}$.
    \item For a Radon measure $\mu$ on $\mfR^{n+1}$:
    \begin{enumerate}
        \item ${\rm spt}\mu$ is the \textit{support} of the measure $\mu$.
        For $f:\mfR^{n+1}\ra\mfR^{n+1}$ proper, the \textit{push-forward} of $\mu$ through $f$ is the outer measure $f_\#\mu$ defined by the formula
        \eq{
        \left(f_\#\mu\right)(E)
        =\mu\left(f^{-1}(E)\right),\quad E\subset\mfR^{n+1}.
        }
        See \cite[Section 2.4]{Mag12};
        \item For $k\in\mbN$, we say that $\mu$ is \textit{$k$-rectifiable} if there exists a $k$-rectifiable set $M$ and a positive function $\tau\in L^1_{\rm loc}(M,\mcH^k)$ such that $\mu=\tau\mcH^k\llcorner M$.
        A set $M$ is \textit{$k$-rectifiable} if it can be covered, up to a $\mcH^k$-negligible set, by a countable family of $C^1$, $k$-dimensional submanifolds of $\mfR^{n+1}$.
    \end{enumerate}
    \item For a product space $X\times Y$, $\pi$ denotes the canonical projection to $X$.
    Namely, $\pi(x,y)=x$, $(x,y)\in X\times Y$.
    \item For a set of finite perimeter/Caccioppoli set $E$ (in $\mfR^{n+1}$ or in a Riemannian manifold), we denote by $\mu_E$ its \textit{Gauss-Green measure}, $\p^\ast E$ its \textit{reduced boundary}, $\nu_E$ its \textit{measure-theoretic inwards pointing unit normal}, and $P(E)$ its \textit{perimeter}.
    For basic knowledge on sets of finite perimeter, we refer to the monograph \cite{Mag12}.
    \item For a bounded domain $\Om$ of class $C^2$, denote by $h_S$ the second fundamental form of the hypersurface $S\subset\mfR^{n+1}$.
    There exists $\rho_S=\rho_S(n,\abs{h_S}_{C^0(S)})>0$, such that the \textit{normal exponential map}
\eq{\label{eq:one-sided-tubular}
S\times[0,\rho_S)\ra U^+_{\rho_S}(S),\quad(x,\rho)\mapsto x+\rho\nu^S(x)
}
is a diffeomorphism.
Here $U^+_{\rho_S}$ is the one-sided tubular neighborhood of $S$.
We use $d_S$ to denote the signed distance function with respect to $S$ such that $d_S>0$ in $\Om$, then $\abs{\na d_S}\leq1$ for $\mcL^{n+1}$-a.e. on $\mfR^{n+1}$, and $\abs{\na^2 d_S}$ is controlled on $U^+_{\frac{\rho_S}2}(S)$.
See e.g., \cite[Section 14.6]{GT01}.
\end{itemize}
The following disintegration for Radon measures is well-known, see e.g., \cite{AFP00,AGS08}.
\begin{lemma}[Disintegration]
Let $\gamma$ be a Radon measure on $X\times Y$ with compact support, where $X,Y$ are metric spaces, and denote by $\pi:X\times Y\ra X$ the projection map.
Then $\norm{\gamma}=\pi_\#\gamma$ is a Radon measure, and there is a $\norm{\gamma}$-a.e. uniquely determined family $(\gamma^x)_{x\in X}$ of Radon probability measures on $Y$, such that for any Borel function $\phi:X\times Y\mapsto[0,\infty]$ one has
\eq{
\int_{X\times Y}\phi\rd\gamma
=\int_X\int_Y\phi(x,y)\rd\gamma^x(y)\rd\norm{\gamma}(x).
}
\end{lemma}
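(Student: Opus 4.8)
The plan is to reconstruct the slice measures $\gamma^x$ from Radon--Nikodym derivatives of suitable ``partial'' measures on $X$ and then invoke the Riesz representation theorem, using that compact support forces every measure in sight to be finite and the relevant space of test functions to be separable. First I would reduce to a compact setting: writing $K_Y\coloneqq\pi_2({\rm spt}\,\gamma)$, which is compact, replacing $Y$ by $K_Y$ costs nothing, and $\norm{\gamma}=\pi_\#\gamma$ is then a finite Radon measure on $X$, supported on the compact set $\pi({\rm spt}\,\gamma)$.

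Next, for each $\chi\in C(K_Y)$ the set function $A\mapsto\int_{A\times K_Y}\chi(y)\rd\gamma(x,y)$ is a finite signed Radon measure on $X$, absolutely continuous with respect to $\norm{\gamma}$ with density bounded in modulus by $\norm{\chi}_\infty$; I fix Borel representatives $g_\chi\in L^\infty(\norm{\gamma})$ of these densities. Choosing a countable $\mathbb{Q}$-linear subspace $\mcD\subset C(K_Y)$ that is sup-norm dense and contains the constant $1$, one finds a single $\norm{\gamma}$-null set $N\subset X$ off which $\chi\mapsto g_\chi(x)$ is simultaneously $\mathbb{Q}$-linear on $\mcD$, nonnegative on $\{\chi\in\mcD:\chi\geq 0\}$, and normalized by $g_1(x)=1$. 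For $x\notin N$ the estimate $\abs{g_\chi(x)}\leq\norm{\chi}_\infty$ together with density of $\mcD$ lets me extend $\chi\mapsto g_\chi(x)$ uniquely to a bounded nonnegative linear functional on all of $C(K_Y)$, whose Riesz representative is a Radon measure $\gamma^x$ on $K_Y$, of total mass $g_1(x)=1$ (extend it by $0$ to $Y$); on $N$ I set $\gamma^x$ to be any fixed probability measure. The map $x\mapsto\gamma^x$ is weakly $\norm{\gamma}$-measurable because $x\mapsto\int\chi\rd\gamma^x$ coincides $\norm{\gamma}$-a.e. with the Borel function $g_\chi$ for every $\chi\in C(K_Y)$.

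Then I would establish the disintegration identity by a monotone class argument. By construction it holds for $\phi(x,y)=\psi(x)\chi(y)$ with $\psi$ bounded Borel on $X$ and $\chi\in C(K_Y)$, since both sides equal $\int_X\psi(x)g_\chi(x)\rd\norm{\gamma}(x)$; a first step upgrades $\chi$ from continuous to bounded Borel, a second passes to products of two bounded Borel factors, and a third, via monotone convergence on the truncations $\phi\wedge k$, reaches every Borel $\phi:X\times Y\ra[0,\infty]$. For uniqueness, any two admissible families agree, for $\norm{\gamma}$-a.e. $x$, when integrated against a given $\chi\in\mcD$; intersecting these countably many full-measure sets and invoking density of $\mcD$ forces $\gamma^x=\tilde\gamma^x$ for $\norm{\gamma}$-a.e. $x$.

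The step I expect to be the main obstacle is exactly the ``gluing'' of null sets in the construction of $\gamma^x$: linearity, positivity, and normalization of $\chi\mapsto g_\chi(x)$ are a priori available only off a $\norm{\gamma}$-null set for \emph{each} fixed relation, and it is the countability of $\mcD$ — at our disposal here precisely because compact support makes $C(K_Y)$ separable — that allows these countably many null sets to be merged into one before the Riesz representation theorem can be applied pointwise. The remaining ingredients (absolute continuity and Radon--Nikodym, Riesz representation on a compact metric space, and the monotone class steps) are routine.
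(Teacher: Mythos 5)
Your argument is correct: this lemma is stated in the paper without proof, with a citation to the standard references (Ambrosio--Fusco--Pallara, Ambrosio--Gigli--Savar\'e), and your construction — Radon--Nikodym densities $g_\chi$ for a countable sup-norm dense family in $C(K_Y)$, merging the countably many null sets, pointwise Riesz representation to produce the probability measures $\gamma^x$, and a monotone class argument to pass from products $\psi(x)\chi(y)$ to general nonnegative Borel $\phi$ — is exactly the standard proof given there. The compact-support reduction you make at the start is also what justifies working inside a compact (hence separable) metric space, so the separability of $C(K_Y)$ and the identification of the Borel $\sigma$-algebra of the product with the product $\sigma$-algebra cause no trouble; nothing further is needed.
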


\subsection{Oriented integral varifolds}\label{Sec:2-OIV}

We work in the co-dimension-1 case, let us first recall the definitions of oriented and unoriented Grassmannian bundle, see \cite{Hutchinson86,Bellettini23}.
\begin{itemize}
    \item The \textit{oriented Grassmannian} of $n$-planes in $\mfR^{n+1}$ is identified with $\mfS^n$ through \textit{Hodge star operator} $\star$ as follows:
    for every $v\in\mfS^n$, $\star v$ is the unit $n$-vector which spans $v^\perp$ and whose orientation is such that $\star v\wedge v$ agrees with the orientation of $\mfR^{n+1}$.
    For any $A\subset\mfR^{n+1}$, the \textit{oriented Grassmannian bundle} over $A$, denoted by $G^o_n(A)$, is then identified with $A\times\mfS^n$, which will be viewed as embedded in $A\times\mfR^{n+1}$.
    \item The \textit{unoriented Grassmannian} of (unoriented) $n$-planes, denoted by $G(n,n+1)$, is identified with the $(n+1)\times(n+1)$-matrices of orthogonal projection onto the said plane, which in turn is identified with a submanifold of $\mfR^{(n+1)^2}$.
    For any $A\subset\mfR^{n+1}$, the \textit{unoriented Grassmannian bundle} over $A$, denoted by $G_n(A)=A\times G(n,n+1)$, is then identified as a subspace in $A\times\mfR^{(n+1)^2}$.
    \item Let $\mfq:A\times\mfS^n\ra A\times \mfR^{(n+1)^2}$ denote the projection from the oriented Grassmannian bundle to the unoriented Grassmannian bundle, by the relation
    \eq{\label{defn:mfq}
    \mfq(x,v)=(x,(P_{ij})_{i,j=1,\ldots,n+1}),
    }
    where the entries of the projection matrix are given by
    \eq{
    P_{ij}
    =\de_{ij}-v_iv_j,\text{ for }i,j\in\{1,\ldots,n+1\}.
    }
\end{itemize}
\begin{definition}
\normalfont
An \textit{oriented $n$-varifold} $V$ is a Radon measure in the oriented Grassmannian bundle $\mfR^{n+1}\times\mfS^n\subset\mfR^{n+1}\times\mfR^{n+1}$.
If the weight measure of $V$ (i.e., the push-forward of $V$ under the canonical projection map $\pi:\mfR^{n+1}\times\mfS^n\ra\mfR^{n+1}$), denoted by $\norm{V}$, is supported on some subset $A\subset\mfR^{n+1}$, then $V$ is called an oriented $n$-varifold on $A$. 

An oriented $n$-varifold $V$ is called an \textit{oriented integral $n$-varifold}, if there exist an $n$-rectifiable set $R$ in $\mfR^{n+1}$, a measurable choice of orientation $\xi:R\ra\mfR^{n+1}$ such that for $\mcH^n$-a.e. $x\in R$, $\xi(x)\in\mfS^n\subset\mfR^{n+1}$ is one of the two choices of unit normal to the approximate tangent space $T_xR$, and a couple $(\theta_1,\theta_2)$ of locally integrable $\mbN$-valued functions on $R$, with $(\theta_1,\theta_2)\neq(0,0)$ for $\mcH^n$-a.e. $x\in R$, so that $V$ acts on $\phi\in C_c^0(\mfR^{n+1}\times\mfR^{n+1})$ as follows:
\eq{\label{defn:V-OIV}
V(\phi)
=\int_R\theta_1(x)\phi(x,\xi(x))+\theta_2(x)\phi(x,-\xi(x))\rd\mcH^n(x).
}
In this situation we write $V=\mfv(R,\xi,\theta_1,\theta_2)$, and it is easy to see that $\norm{V}=(\theta_1+\theta_2)\mcH^n\llcorner R$.
\end{definition}

Convergence as oriented varifolds means weak-star convergence as Radon measures in the oriented Grassmannian bundle.
\subsubsection{Induced currents}\label{Sec:2-current}
Associated to an oriented $n$-varifold $V$ there is an $n$-current $\mfc(V)$, defined by its action on any $n$-form $\eta$ having compact support in $\mfR^{n+1}$ as follows:
\eq{
\mfc(V)(\eta)
=\int\left<\star v,\eta(x)\right>\rd V(x,v).
}
If $V=(R,\xi,\theta_1,\theta_2)$ is an oriented integral $n$-varifold then $\mfc(V)$ is the current of integration on $R$ with orientation $\xi$ and multiplicity $\theta_1-\theta_2$, i.e., $\mfc(V)=\mfc(R,\xi,\theta_1-\theta_2)$.

Following the classical notations in \cite{Simon83}, we use $\p$ to denote the \textit{boundary operator} of currents, and denote by $\mbM(\cdot)$ the \textit{mass} of currents.

Note that if $V^l\ra V$ as oriented varifolds, then $\mfc(V^l)\ra\mfc(V)$ as currents, since for any $n$-form $\eta$ with compact support, the function $\left<\star v,\eta(x)\right>$ is in $C^0_c(\mfR^{n+1}\times\mfR^{n+1})$.
\subsubsection{Induced unoriented varifolds}
Associated to an oriented $n$-varifold $V$ there is an unoriented $n$-varifold, defined just as the push-forward $\mfq_\#V$, which is then a Radon measure on the unoriented Grassmannian bundle $\mfR^{n+1}\times\mfR^{(n+1)^2}$.

If $V=\mfv(R,\xi,\theta_1,\theta_2)$ is an integral oriented $n$-varifold then $\mfq_\#V$ is the unoriented integral varifold of integration on $R$ with multiplicity $\theta_1+\theta_2$.
Moreover, the weight measure $\norm{\mfq_\#V}$ is exactly $(\theta_1+\theta_2)\mcH^n\llcorner R$, and is in fact the same as $\norm{V}$.

Note that if $V^l\ra V$ as oriented varifolds, then $\mfq_\#V^l\ra\mfq_\#V$ as unoriented varifolds.
\subsubsection{Hutchinson's compactness theorem}
We are going to use the following celebrated compactness theorem for 
oriented integral varifolds by Hutchinson \cite{Hutchinson86}.
The statement here is a slightly simplified version, but sufficient for our purposes.
\begin{theorem}\label{Thm:Hutchinson-cpt}
Let $\Om\subset\mfR^{n+1}$ be a bounded domain.
Let $\{V_l\}_{l\in\mbN}$ be oriented integral $n$-varifolds on $\overline\Om$ such that
\eq{\label{eq:Hutchison-bound}
\sup_{l\in\mbN}\left\{\norm{\mfq_\#V_l}(\overline\Om)+\norm{\de(\mfq_\#V_l)}(\overline\Om)+\mbM(\p\mfc(V_l))\right\}<\infty.
}
Then there exists an oriented integral $n$-varifold $V$ on $\overline\Om$, to which $V_l$ subsequentially converges, as oriented varifolds.
\end{theorem}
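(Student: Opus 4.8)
The plan is to obtain the limit by weak-$\ast$ compactness of Radon measures, to upgrade its induced unoriented varifold and its induced current to \emph{integral} objects via the two classical integral compactness theorems, and then to recover the oriented-integral structure of the limit from these two pieces of information by a disintegration argument.

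First, since $\Om$ is bounded the oriented Grassmannian bundle $\overline\Om\times\mfS^n$ is compact, and each $V_l$ is a Radon measure on it of total mass $V_l(\overline\Om\times\mfS^n)=\norm{\mfq_\#V_l}(\overline\Om)$, which is uniformly bounded by \eqref{eq:Hutchison-bound}. By weak-$\ast$ compactness of bounded sequences of Radon measures, a subsequence (not relabelled) converges weakly-$\ast$ to a Radon measure $V$ on $\overline\Om\times\mfS^n$, i.e.\ to an oriented $n$-varifold on $\overline\Om$. Along this subsequence one automatically has $\mfq_\#V_l\to\mfq_\#V$ as unoriented varifolds and $\mfc(V_l)\to\mfc(V)$ as currents, by the two observations recorded in Section~\ref{Sec:2-OIV}.

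Next I would upgrade the two induced limits. On the one hand, each $\mfq_\#V_l$ is an integral $n$-varifold --- integration over the rectifiable set carrying $V_l$ with the $\mbN$-valued multiplicity $\theta_1^l+\theta_2^l$ --- and $\sup_l\big(\norm{\mfq_\#V_l}(\overline\Om)+\norm{\de(\mfq_\#V_l)}(\overline\Om)\big)<\infty$ by \eqref{eq:Hutchison-bound}; Allard's compactness theorem for integral varifolds (see e.g.\ \cite{Simon83}) then forces $\mfq_\#V$ to be an integral $n$-varifold, so that $\norm{V}=\norm{\mfq_\#V}=\theta\,\mcH^n\llcorner R$ for an $n$-rectifiable $R\subset\overline\Om$ and a locally integrable $\theta:R\to\mbN\setminus\{0\}$, with an approximate tangent plane $T_xR$ (with projection $P(x)$) at $\mcH^n$-a.e.\ $x$. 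On the other hand, each $\mfc(V_l)=\mfc(R_l,\xi_l,\theta_1^l-\theta_2^l)$ is an integral $n$-current supported in the compact set $\overline\Om$, with $\mbM(\mfc(V_l))\le\norm{\mfq_\#V_l}(\overline\Om)$ (from $\abs{\theta_1^l-\theta_2^l}\le\theta_1^l+\theta_2^l$) and $\mbM(\p\mfc(V_l))$ uniformly bounded; the Federer--Fleming compactness theorem (see e.g.\ \cite{Simon83}) then shows $\mfc(V)$ is an integral $n$-current, and since $\norm{\mfc(V)}\le\norm{V}$ (immediate from the defining formula for $\mfc$) it is carried by $R$ with an $\mcH^n$-measurable $\mbZ$-valued multiplicity $m$, $\abs m\le\theta$ a.e.

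It remains to combine these. Disintegrating $V=\norm{V}\otimes V^x$ over its weight, and noting that pushforward by $\mfq$ commutes with disintegration, one gets $\mfq_\#(V^x)=(\mfq_\#V)^x=\de_{P(x)}$ for $\norm{V}$-a.e.\ $x$; since the only $v\in\mfS^n$ with $\mfq(x,v)=(x,P(x))$ are $v=\pm\nu(x)$, the two unit normals to $T_xR$, this yields $V^x=\lambda(x)\de_{\nu(x)}+(1-\lambda(x))\de_{-\nu(x)}$ for a measurable choice of normal $\nu$ and a measurable $\lambda:R\to[0,1]$. Hence $V=\mfv(R,\nu,\theta_1,\theta_2)$ with $\theta_1=\lambda\theta$ and $\theta_2=(1-\lambda)\theta$; these are a priori only non-negative real valued, but $\theta_1+\theta_2=\theta$ is $\mbN$-valued by the varifold part, while $\mfc(V)=\mfc(R,\nu,\theta_1-\theta_2)$ together with the current part gives $\theta_1-\theta_2=m\in\mbZ$ a.e. The remaining --- and, I expect, genuinely hard --- point is that $\theta_1$ and $\theta_2$ are \emph{themselves} non-negative integers, equivalently that $\theta$ and $m$ have the same parity $\norm{V}$-a.e.; this is the substance of Hutchinson's theorem, and would be obtained by a blow-up analysis at $\norm{V}$-a.e.\ point, where the approximating $V_l$ locally split into finitely many oriented sheets whose unoriented multiplicities and signed current multiplicities both add up, each obeying the pointwise congruence $\theta_1^l+\theta_2^l\equiv\theta_1^l-\theta_2^l\pmod 2$, a relation that therefore survives in the limit. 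Granting this, $\theta_1,\theta_2:R\to\mbN$ and $V=\mfv(R,\nu,\theta_1,\theta_2)$ is an oriented integral $n$-varifold on $\overline\Om$, to which the chosen subsequence converges.
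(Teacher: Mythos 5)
The paper does not actually prove this statement: it is imported verbatim (in simplified form) from Hutchinson \cite{Hutchinson86}, so there is no internal proof to compare against; your proposal must therefore stand on its own, and it does not quite.

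Your reduction is the natural one and is carried out correctly up to the last step: weak-$\ast$ compactness gives a limit oriented varifold $V$ with $\mfq_\#V_l\to\mfq_\#V$ and $\mfc(V_l)\to\mfc(V)$; Allard's integral compactness makes $\mfq_\#V$ integral, Federer--Fleming makes $\mfc(V)$ an integral current carried by the same rectifiable set $R$; and the fibrewise disintegration correctly identifies $V=\mfv(R,\nu,\theta_1,\theta_2)$ with $\theta_1=\tfrac12(\theta+m)$, $\theta_2=\tfrac12(\theta-m)$, $0\le|m|\le\theta$. The genuine gap is the step you yourself flag: the parity statement $\theta\equiv m\ (\mathrm{mod}\ 2)$ for $\norm{V}$-a.e.\ point, equivalently $\theta_1,\theta_2\in\mbN$. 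This is not a loose end but the entire substance of Hutchinson's theorem, and the sketch offered does not prove it. A pointwise mod-$2$ congruence does not pass through weak-$\ast$ limits by itself; the assertion that near $\norm{V}$-a.e.\ point the approximating $V_l$ ``split into finitely many oriented sheets'' whose unoriented and signed multiplicities both add up to the limit densities is precisely what has to be established, and it requires a quantitative blow-up/slicing argument in which the uniform bound on $\mbM(\p\mfc(V_l))$ enters at small scales, beyond its role in Federer--Fleming. That this cannot be finessed is shown by the model example of a fixed multiplicity-one plane whose orientation flips on a checkerboard of mesh $1/l$: each $V_l$ is a single oriented sheet satisfying the pointwise congruence, the unoriented varifolds are constant (so Allard's hypotheses hold), yet the limit oriented measure has $\theta_1=\theta_2=\tfrac12$. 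Of course this sequence violates the hypothesis $\sup_l\mbM(\p\mfc(V_l))<\infty$, but your concluding heuristic never uses that hypothesis, so as written it would ``prove'' integrality in this situation as well. To close the argument you must either carry out the blow-up analysis controlling how the orientation can vary within an unoriented sheet using the current boundary bound, or simply invoke \cite{Hutchinson86}, as the paper does.
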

\subsection{Prescribed contact angle condition}

We first recall the following from \cite{WZ}, and we restrict ourselves to the co-dimension-1 case.
\begin{definition}[Unoriented capillary bundle]\label{Defn:unorien-bundle}
\normalfont
For a possibly unbounded domain $\Om\subset\mfR^{n+1}$ of class $C^1$ with boundary $S\coloneqq\p\Om$, denote by $\nu^S$ the inwards pointing unit normal field along $S$.
Let $\beta:S\to (0,\pi)$ be a $C^1$-function on $S$.
For any $x\in S$, we define $G_{n,\beta}(x)$ to be the collection of (un-oriented) $n$-planes $P\in G(n,n+1)$ satisfying
\eq{\label{eq:P^m(nu^S)}
    \abs{P(\nu^S(x))}=\sin\beta(x),
}
where $P(u)$ denotes the orthogonal projection of the vector $u$ onto the plane $P$.

By collecting all such $G_{n,\beta}(x)$ for $x\in S$, we obtain a subspace of $G_n(S)=S\times G(n,n+1)$, denoted by $G_{n,\beta}(S)$, endowed with the subspace topology.

For any $(x,P)\in G_{n,\beta}(S)$ we define a unit vector field
\eq{\label{defn:mfn}
\mfn(x,P)
\coloneqq\frac{P(\nu^S(x))}{\abs{P(\nu^S(x))}},
}
which is well-defined thanks to $\sin\beta>0$.
One can easily check that such a vector field serves as the inwards-pointing unit co-normal for a capillary hypersurface.
\end{definition}
Enlightened by this, we introduce Definition \ref{Defn:orien-bundle}.
By virtue of \eqref{eq:<v,nu^S>} it is clear that for any $v\in G^o_{n,\beta}(x)$, there holds
\eq{\label{eq:v^perp-sinbeta}
\abs{v^\perp(\nu^S(x))}
=\sin\beta(x),
}
where $v^\perp\in G(n,n+1)$ is the orthogonal complement of $v$, and hence
\eq{\label{eq:<n^o,nu^S>=sinbeta}
\left<\mfn^o(x,v),\nu^S(x)\right>
=\sin\beta(x),\quad\forall(x,v)\in G^o_{n,\beta}(S).
}

We point out that Definition \ref{Defn:orien-bundle} is a \textit{lift} of Definition \ref{Defn:unorien-bundle} to the oriented Grassmannian bundle over $S$ in the following sense:
From $G^o_n(S)$ to $G_n(S)$ there is the canonical projection map $\mfq$, see \eqref{defn:mfq}.
Restricted to $G^o_{n,\beta}(S)$ we get
\eq{\label{eq:mfq-G^o_beta-G_beta}
\mfq(G^o_{n,\beta}(S))
=G_{n,\beta}(S),
}
thanks to \eqref{eq:v^perp-sinbeta}.
Moreover, by definitions
\eq{\label{eq:mfn^o-mfn}
\mfn^o(x,v)
=\mfn\circ\mfq(x,v),\quad\forall(x,v)\in G^o_{n,\beta}(S).
}
\section{Oriented integral curvature varifolds with capillary boundary}\label{Sec:3}
\subsection{Definition and basic properties}
We start with the following remarks on Definition \ref{Defn:OICV}.
\begin{remark}
\normalfont
If we test \eqref{eq:ICV-1stvariation-1} with $\phi\in C_c^1(\mfR^{n+1}\times\mfR^{n+1})$ such that ${\rm spt}\phi\cap{\rm spt}\norm{\Gamma}=\emptyset$, then it reads
\eq{\label{eq:ICV-1stvariation-2}
\int\left(\left(\de_{ij}-v_iv_j\right)D_j\phi+W_{ia}D^\ast_a\phi-\left(W_{rr}v_i+W_{ri}v_r\right)\phi\right)\rd V
=0.
}
\end{remark}

\begin{remark}\label{Rem:C^2-surface}
\normalfont

Let $M$ be an oriented $C^2$-embedded hypersurface in $\Om$, with a chosen unit normal $\nu$, such that $M$ meets $S$ transversally along $\p M$, with contact angle function $\beta$ in the sense of \eqref{condi:capillary-angle}.
Let $V=\mfv(M,\nu,1,0)$ be an oriented integral varifold and $\Gamma=\mcH^{n-1}\llcorner\p M\otimes\de_{\nu(x)}$ be a Radon measure on $G^o_{n,\beta}(S)$.
Computing $\int_M{\rm div}_M\phi(x,\nu(x))\rd\mcH^n(x)$ and using the tangential divergence theorem in conjunction with the fact that $\mfn^o(x,\nu(x))$ is exactly the inwards-pointing unit co-normal along $\p M\subset M$ (recall the argument subsequent to \eqref{defn:mfn^o}),
it is direct to check that $V$ has curvature $W_{ia}$ and prescribed contact angle $\beta$ with $S$ along $\Gamma$ in the sense of Definition \ref{Defn:OICV}, with $W_{ia}(x,v)
=\na_{e_i^T}\left<\nu,e_a\right>$ for $(x,v)$ such that $v=\nu(x)$ (in this way $W_{ia}$ is defined for $V$-a.e. $(x,v)$), where $T$ denotes the tangential part of a vector with respect to $M$.

\end{remark}

To discuss the preservation of prescribed mean curvature property in compactness results, we need the following properties.

\begin{proposition}\label{Prop:M_i-H_i}
Let $\Om\subset\mfR^{n+1}$ be a bounded domain of class $C^2$, $\beta\in C^1(S, (0,\pi))$.
Let $V=\mfv(R,\xi,\theta_1,\theta_2)$ be an oriented integral varifold on $\overline\Om$, $\Gamma$ be a Radon measure on $G^o_{n,\beta}(S)$.
Assume that there exist functions $M_i\in L^p(V)$ for $i\in\{1,\ldots,n+1\}$ and $p\geq1$, such that for every $\varphi\in C^1(\mfR^{n+1})$, there holds
\eq{\label{eq:lift-1st-variation}
\int\left(\left(\de_{ij}-v_iv_j\right)D_j\varphi+M_i\varphi\right)\rd V
=-\int\varphi\mfn^o_i\rd\Gamma.
}
Then the first variation takes the form 
\eq{
\de(\mfq_\#V)
=\vec H\norm{V}-\mfn\mfq_\#\Gamma,
}
with $\vec H=(H_1,\ldots,H_{n+1})$, where
\eq{\label{defn:H_i}
H_i(x)
=\frac{\theta_1(x)M_i(x,\xi(x))+\theta_2(x)M_i(x,-\xi(x))}{(\theta_1+\theta_2)(x)}
\text{ for }\norm{V}\text{-a.e. }
}
\end{proposition}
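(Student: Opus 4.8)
The plan is to obtain the formula by testing the hypothesis \eqref{eq:lift-1st-variation} against the components of a vector field and then unwinding the definitions of $\mfq_\#V$, of $\mfq_\#\Gamma$, and of the representation $V=\mfv(R,\xi,\theta_1,\theta_2)$. Fix a test field $X=(X_1,\dots,X_{n+1})\in C_c^1(\mfR^{n+1};\mfR^{n+1})$. By the definition of the first variation of the unoriented varifold $\mfq_\#V$ (cf.\ \cite{Simon83}), together with the definition of the push-forward and of the projection $\mfq$ in \eqref{defn:mfq}, one has $\de(\mfq_\#V)(X)=\int\left(\de_{ij}-v_iv_j\right)D_jX_i\,\rd V$, the integrand being the tangential divergence of $X$ relative to the $n$-plane $v^\perp$. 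Since $C_c^1(\mfR^{n+1})\subset C^1(\mfR^{n+1})$, I would apply \eqref{eq:lift-1st-variation} with $\varphi=X_i$ for each $i\in\{1,\dots,n+1\}$ and sum over $i$, which gives
\eq{\label{pf:delta-step}
\de(\mfq_\#V)(X)
=-\int M_iX_i\,\rd V-\int X_i\mfn^o_i\,\rd\Gamma.
}

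It then remains to identify the two terms on the right of \eqref{pf:delta-step}. For the bulk term, each $X_i$ depends on $x$ alone, so applying \eqref{defn:V-OIV} (which holds for every $V$-integrable integrand) to $(x,v)\mapsto M_i(x,v)X_i(x)$ yields
\eq{
\int M_iX_i\,\rd V
=\int_R\left(\theta_1M_i(x,\xi(x))+\theta_2M_i(x,-\xi(x))\right)X_i(x)\,\rd\mcH^n(x).
}
Because $(\theta_1+\theta_2)>0$ $\mcH^n$-a.e.\ on $R$ by the definition of an oriented integral varifold, the functions $H_i$ of \eqref{defn:H_i} are well defined $\norm{V}$-a.e., and using $\norm{V}=(\theta_1+\theta_2)\mcH^n\llcorner R$ this becomes $\int M_iX_i\,\rd V=\int\langle\vec H,X\rangle\,\rd\norm{V}$. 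For the boundary term, \eqref{eq:mfn^o-mfn} gives $\mfn^o=\mfn\circ\mfq$ on $G^o_{n,\beta}(S)$, while by \eqref{eq:mfq-G^o_beta-G_beta} the measure $\mfq_\#\Gamma$ is carried by $G_{n,\beta}(S)$, where $\mfn$ is defined; hence the change-of-variables formula for push-forwards gives $\int X_i\mfn^o_i\,\rd\Gamma=\int\langle X,\mfn\rangle\,\rd(\mfq_\#\Gamma)$. Substituting both identities into \eqref{pf:delta-step} produces, for every $X\in C_c^1(\mfR^{n+1};\mfR^{n+1})$, the identity $\de(\mfq_\#V)(X)=-\int\langle\vec H,X\rangle\,\rd\norm{V}-\int\langle X,\mfn\rangle\,\rd(\mfq_\#\Gamma)$, which is precisely the claimed formula $\de(\mfq_\#V)=\vec H\norm{V}-\mfn\mfq_\#\Gamma$ for the first variation.

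I do not anticipate a genuine obstacle, since the argument is essentially an unwinding of definitions. The one step deserving care is the passage, inside $\int M_iX_i\,\rd V$, from the $V$-integral to the $\mcH^n\llcorner R$-integral of the two "sheetwise" functions $x\mapsto M_i(x,\pm\xi(x))$: one must record that these are $\mcH^n\llcorner R$-measurable and that the resulting integrals are finite, which follows from $M_i\in L^p(V)$ together with the structure of $L^p$ spaces of an oriented integral varifold and is in any case already implicit in the validity of \eqref{eq:lift-1st-variation} for all $\varphi\in C^1(\mfR^{n+1})$. A secondary point is that $\mfn$ is a priori defined only on $G_{n,\beta}(S)$, which is why one invokes \eqref{eq:mfq-G^o_beta-G_beta} before writing $\int\langle X,\mfn\rangle\,\rd(\mfq_\#\Gamma)$.
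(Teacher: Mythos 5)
Your argument is correct and follows essentially the same route as the paper: test \eqref{eq:lift-1st-variation} with the components of a compactly supported vector field, rewrite the bulk term via the integral representation \eqref{defn:V-OIV} of the oriented integral varifold, and handle the boundary term through $\mfn^o=\mfn\circ\mfq$ and the push-forward $\mfq_\#\Gamma$. The only cosmetic difference is that the paper first obtains $H_i$ abstractly as the Radon--Nikodym density of $(\pi\circ\mfq)_\#(M_iV)$ with respect to $\norm{V}$ and identifies the explicit formula \eqref{defn:H_i} afterwards, whereas you define $H_i$ directly by that formula and verify the identity, which is an equivalent ordering of the same steps.
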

\begin{proof}
For a fixed $i$, let $\mu=(\pi\circ\mfq)_\#(M_iV)$.
Note that $\mu<<\norm{V}$ since for any $A$ such that $\norm{V}(A)=0$, it is direct to check that $\mu(A)=0$.
This implies by Radon-Nikodym theorem that
\eq{
\mu=H_i\norm{V}.
}

To proceed, we observe that for every $\varphi\in C_c^1(\mfR^{n+1})$, the first variation of $\mfq_\#V$ acting on the vector field $\varphi e_i$ is given by
\eq{
\int{\rm div}_V(\varphi e_i)\rd\norm{V}
=\int_R(T_xR)_{ij}D_j\varphi(x)(\theta_1(x)+\theta_2(x))\rd\mcH^n(x)
=\int\left(\de_{ij}-v_iv_j\right)D_j\varphi\rd V,
}
where $T_xR$ is the approximate tangent space to $R$ at $x$, and $(T_xR)_{ij}=\de_{ij}-v_iv_j$ for $v=\pm\xi(x)$, which leads to the last equality.
On the other hand, by \eqref{eq:lift-1st-variation} we have
\eq{
\int\left(\de_{ij}-v_iv_j\right)D_j\varphi\rd V
=&-\int M_i\varphi\rd V-\int\varphi\mfn^o_i\rd\Gamma.
}
Then observe that
\eq{
-\int M_i\varphi\rd V
=-\int\varphi\rd(\pi\circ\mfq)_\#(M_iV)
=-\int H_i\varphi\rd\norm{V}
=-\int\left<\vec H,\varphi e_i\right>\rd\norm{V},
}
where $\vec H=(H_1,\ldots,H_{n+1})$, and that
\eq{
-\int\varphi\mfn^o_i\rd\Gamma
=-\int\varphi\mfn_i\rd\mfq_\#\Gamma
=-\int\left<\mfn,\varphi e_i\right>\rd\mfq_\#\Gamma.
}
Hence by linearity of the first variation we obtain: for any $X\in C_c^1(\mfR^{n+1},\mfR^{n+1})$
\eq{
\de(\mfq_\#V)(X)
=-\int\left<\vec H,X\right>\rd\norm{V}-\int\left<\mfn,X\right>\rd\mfq_\#\Gamma
}
as required.
Furthermore, note that $\vec H\in L^p(\norm{V})$ since $M_i\in L^p(V)$.

Finally, note that for any $\phi\in C_c^0(\mfR^{n+1}\times\mfR^{n+1})$ there holds by definition
\eq{
M_iV(\phi)
=V(M_i\phi)
=\int_R\theta_1(x)M_i(x,\xi(x))\phi(x,\xi(x))+\theta_2(x)M_i(x,-\xi(x))\phi(x,-\xi(x))\rd\mcH^n(x),
}
and hence for $\phi(x,v)=\varphi(x)$
\eq{
\int\varphi H_i\rd\norm{V}
=\mu(\varphi)
=&\int\varphi\rd(\pi\circ\mfq)_\#(M_iV)\\
=&\int_R\left(\theta_1(x)M_i(x,\xi(x))+\theta_2(x)M_i(x,-\xi(x))\right)\varphi(x)\rd\mcH^n(x).
}
Since $\norm{V}=(\theta_1+\theta_2)\mcH^n\llcorner R$ and $(\theta_1+\theta_2)\neq0$ for $\mcH^n\llcorner R$-a.e., we thus obtain for $\norm{V}$-a.e. $x$,
\eq{\label{eq:H_i(x)}
H_i(x)
=\frac{\theta_1(x)M_i(x,\xi(x))+\theta_2(x)M_i(x,-\xi(x))}{(\theta_1+\theta_2)(x)}.
}
\end{proof}
\begin{remark}\label{Rem:even-lift}
\normalfont
In Proposition \ref{Prop:M_i-H_i}, if in addition $M_i$ are even functions (i.e., $M_i(x,v)=M_i(x,-v)$ for $V$-a.e. ($x,v$)), then we say that $M_i$ are \textit{even lift} of $H_i$ for $i\in\{1,\ldots,n+1\}$.
In this case, for those $x$ such that $\theta_1(x)\neq0$, \eqref{defn:H_i} reads
\eq{
H_i(x)
=M_i(x,\xi(x)).
}
\end{remark}

The following crucial proposition is essentially proved in \cite{Bellettini23}.
\begin{proposition}\label{Prop:W_ia-propperties}
Let $\Om\subset\mfR^{n+1}$ be a bounded domain of class $C^2$, $\beta\in C^1(S, (0,\pi))$.
Let $V=\mfv(R,\xi,\theta_1,\theta_2)$ be an oriented integral $n$-varifold on $\overline\Om$ with curvature $W_{ia}\in L^1(V)$ $(i,a\in\{1,\ldots,n+1\})$ and prescribed contact angle $\beta$ with $S$ along $\Gamma$ as in Definition \ref{Defn:OICV}.
Assume further that
$\norm{V}$ is $n$-rectifiable of class $C^2$, with $x\notin{\rm spt}\norm{\Gamma}$ for $\norm{V}$-a.e. $x$.

Then $W_{ia}$ are odd (i.e., for $V$-a.e. we have $W_{ia}(x,v)=-W_{ia}(x,-v)$) and unique (i.e., any two choices of $W_{ia}$ in \eqref{eq:ICV-1stvariation-1} must agree $V$-a.e.) functions.
Moreover,
\begin{enumerate}
    \item [($a$)] the function $-(W_{rr}v_i+W_{ri}v_r)$ appearing in \eqref{eq:ICV-1stvariation-1} and \eqref{eq:ICV-1stvariation-2} is $V$-a.e. the even lift of $H_i$, where $H_i$ is given by \eqref{defn:H_i};
    \item [($b$)] $W_{ij}v_j=0$ for $V$-a.e.;
    \item [($c$)] For $V$-a.e. we have $W_{ij}=W_{ji}$, and $W_{ri}v_r=0$, which in turn shows that the term $(-W_{rr}v_i+w_{ri}v_r)$ in \eqref{eq:ICV-1stvariation-1} and \eqref{eq:ICV-1stvariation-2} can be written as $-W_{rr}v_i$.
\end{enumerate}
\end{proposition}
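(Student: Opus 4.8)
The plan is to reduce everything to the corresponding statements in Bellettini's work \cite{Bellettini23} by localizing away from the capillary boundary. The crucial hypothesis is that $x\notin\mathrm{spt}\,\norm{\Gamma}$ for $\norm{V}$-a.e.\ $x$; this lets me exhaust $\mfR^{n+1}\setminus\mathrm{spt}\,\norm{\Gamma}$ by open sets $U$ and, for test functions $\phi$ with $\mathrm{spt}\,\phi\cap\mathrm{spt}\,\norm{\Gamma}=\emptyset$, work with the boundaryless identity \eqref{eq:ICV-1stvariation-2}. Since $\norm{V}$ gives no mass to $\mathrm{spt}\,\norm{\Gamma}$, any $V$-a.e.\ statement can be checked on such $U$'s. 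Thus $V$ restricted to $\pi^{-1}(U)$ is (in Bellettini's terminology) an oriented integral curvature varifold with no boundary in the sense of \cite{Bellettini23}, and with $\norm{V}$ being $n$-rectifiable of class $C^2$.

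First I would establish oddness and uniqueness of $W_{ia}$. Uniqueness follows because the difference of two admissible families, call it $\widetilde W_{ia}$, satisfies $\int(\widetilde W_{ia}D^\ast_a\phi-(\widetilde W_{rr}v_i+\widetilde W_{ri}v_r)\phi)\,\rd V=0$ for all $\phi$ supported off $\mathrm{spt}\,\norm{\Gamma}$; testing with $\phi(x,v)=\varphi(x)\psi_a(v)$-type functions and exploiting the $C^2$-rectifiability of $\norm{V}$ (so that the approximate tangent plane map $x\mapsto\xi(x)$ is $\norm{V}$-a.e.\ differentiable in the second-order rectifiable sense, via Menne \cite{Menne13} / Santilli \cite{Santilli21}) pins down $\widetilde W_{ia}=0$ $V$-a.e. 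Oddness is obtained by comparing the defining identity for $V=\mfv(R,\xi,\theta_1,\theta_2)$ with that for its ``orientation reversal'' and using uniqueness: the point is that replacing $v\mapsto -v$ in \eqref{eq:ICV-1stvariation-1} turns $W_{ia}(x,v)$ into an admissible choice of $-W_{ia}(x,-v)$, so uniqueness forces $W_{ia}(x,-v)=-W_{ia}(x,v)$. This is exactly Bellettini's argument in the interior; I simply run it on each $U$.

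Next, ($b$), $W_{ij}v_j=0$. Here I would use that when $\norm{V}$ is $C^2$-rectifiable, the second-factor derivatives $D^\ast_a\phi$ can be tested against functions that detect the constraint $|v|^2=1$ on the oriented Grassmannian $\mfS^n$: the tangent space to $\mfS^n$ at $v$ is $v^\perp$, so only the components of $W_{ia}$ in the $v^\perp$-directions of the $a$-index are constrained by \eqref{eq:ICV-1stvariation-2}, and testing with $\phi$ whose $v$-gradient is radial (i.e.\ proportional to $v_a$) shows that the radial part $W_{ij}v_j$ must vanish $V$-a.e. Then ($a$): with ($b$) in hand, the identity \eqref{eq:lift-1st-variation} holds with $M_i:=-(W_{rr}v_i+W_{ri}v_r)$ — one checks that the $D^\ast_a\phi$ term drops out when $\phi(x,v)=\varphi(x)$ depends only on $x$ — so Proposition \ref{Prop:M_i-H_i} applies and $M_i$ is a lift of $H_i$; combined with oddness of $W_{ia}$ one sees $M_i(x,-v)=M_i(x,v)$, i.e.\ it is the \emph{even} lift, which is the content of Remark \ref{Rem:even-lift}. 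Finally ($c$): symmetry $W_{ij}=W_{ji}$ is the Codazzi-type relation, proved by comparing mixed second derivatives — testing \eqref{eq:ICV-1stvariation-2} with $\phi(x,v)=\psi(x)v_j$ against $\phi(x,v)=\psi(x)v_i$ and using that $\norm{V}$ being $C^2$-rectifiable identifies $W_{ia}$ with the classical second fundamental form on the underlying $C^2$-manifold pieces; and $W_{ri}v_r=0$ is then immediate from ($b$) plus symmetry.

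The main obstacle is the translation step: I must be careful that Bellettini's interior results (\cite[Section 3--4]{Bellettini23}) are stated for varifolds whose weight is a priori $C^2$-rectifiable and for test functions on the full oriented Grassmannian bundle, whereas here the a priori object lives on $\overline\Omega$ with a genuine boundary measure $\Gamma$. The hypothesis $x\notin\mathrm{spt}\,\norm{\Gamma}$ for $\norm V$-a.e.\ $x$ is precisely what makes the localization lossless, but I should verify that the exhaustion $U_k\nearrow\mfR^{n+1}\setminus\mathrm{spt}\,\norm{\Gamma}$ together with $\norm{V}(\mathrm{spt}\,\norm{\Gamma})=0$ lets all four pointwise conclusions pass to the limit $k\to\infty$; this is routine measure theory but is the one place where the capillary setting genuinely differs from \cite{Bellettini23}. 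The remaining computations — oddness, radial vanishing, Codazzi symmetry — are then word-for-word as in the interior case, so I would cite \cite{Bellettini23} for the detailed arguments and only indicate the localization.
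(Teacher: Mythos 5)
Your overall strategy coincides with the paper's: the hypothesis that $x\notin{\rm spt}\norm{\Gamma}$ for $\norm{V}$-a.e.\ $x$ is used to reduce everything to the boundaryless identity \eqref{eq:ICV-1stvariation-2} (the paper does this by testing with small bump functions $\chi_r$ centred at $\norm{V}$-a.e.\ $(x,v)$ with $\norm{\Gamma}(B_{r(x)}(x))=0$, which is equivalent to your exhaustion of $\mfR^{n+1}\setminus{\rm spt}\norm{\Gamma}$), after which oddness, uniqueness and ($c$) are quoted from \cite[Proposition 3.1]{Bellettini23}, ($b$) is obtained by testing with $\phi=(1-\abs{v}^2)\psi$ as in \cite[Proposition 2.2]{Bellettini23} (this is exactly your ``radial $v$-gradient'' test made precise), and ($a$) follows from Proposition \ref{Prop:M_i-H_i}, Remark \ref{Rem:even-lift} and oddness, just as you indicate.

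One caveat: your standalone mechanism for oddness --- substituting $v\mapsto-v$ and invoking uniqueness --- does not work as stated. The antipodal map in the second factor is not a symmetry of $V=\mfv(R,\xi,\theta_1,\theta_2)$ unless $\theta_1=\theta_2$; the substitution shows that $-W_{ia}(x,-v)$ is an admissible curvature for the orientation-reversed varifold $\mfv(R,\xi,\theta_2,\theta_1)$ (with the correspondingly transformed boundary measure), not for $V$ itself, so the uniqueness statement for $V$ cannot be applied to conclude $W_{ia}(x,-v)=-W_{ia}(x,v)$. This is also not Bellettini's argument: there, oddness, uniqueness and the symmetry $W_{ij}=W_{ji}$ are extracted by identifying $W$ with the classical second-fundamental-form quantities on the $C^2$ pieces furnished by the $C^2$-rectifiability of $\norm{V}$. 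Since both you and the paper ultimately cite \cite[Proposition 3.1]{Bellettini23} for these points, the proposal as a whole stands, but the reversal trick should not be presented as the proof of oddness, and your Codazzi-type test-function sketch for ($c$) is likewise dispensable once that citation is in place.
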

\begin{proof}
By assumption, to prove the Proposition we just have to deal with those points $x\in R\setminus{\rm spt}\norm{\Gamma}$, at which we must have: for some $r(x)<<1$, $\norm{\Gamma}(B_{r(x)}(x))=0$.
Fix any such $x$, note that when testing \eqref{eq:ICV-1stvariation-1} with some $\chi_r(\cdot)=\frac1{r^n}\chi(\frac{\cdot-(x,v)}r)$, where $\chi$ is any smooth radially symmetric bump function in $\mfR^{n+1}\times\mfR^{n+1}$, we are actually testing \eqref{eq:ICV-1stvariation-2}, provided that $r$ is sufficiently small (in fact, for any $r<r(x)$).
By virtue of this observation we could directly apply \cite[Proposition 3.1]{Bellettini23} to conclude that $W_{ia}$ are odd and unique functions.
Property $(c)$ is a by-product of \cite[Proposition 3.1]{Bellettini23};
Property $(a)$ is a direct consequence of Proposition \ref{Prop:M_i-H_i} (see also Remark \ref{Rem:even-lift}) and the fact that $W_{ia}$ are odd functions;
Property $(b)$ can be obtained similarly as in \cite[Proposition 2.2]{Bellettini23}.
More precisely, by testing \eqref{eq:ICV-1stvariation-1} with $\phi=(1-\abs{v}^2)\psi$ for any $\psi\in C_c^1(\mfR^{n+1}\times\mfR^{n+1})$.
The proof is thus completed.
\end{proof}

\subsection{Unoriented integral curvature varifolds with capillary boundary}
The following definition is introduced in \cite{WZ}, which is consistent with Hutchinson's notion of curvature varifold (see \cite{Hutchinson86}) if $\Gamma=0$, consistent with Mantegazza's notion of curvature varifolds with boundary (see \cite{Mantegazza96}) in which case the boundary varifold is given by $\mfn\Gamma$, and compatible with Kuwert-M\"uler's notion of curvature varifold with orthogonal boundary (see \cite{KM22}) if we choose $\beta\equiv\frac\pi2$.
\begin{definition}\label{Defn:CV}
\normalfont
Let $\Om\subset\mfR^{n+1}$ be a bounded domain of class $C^2$, $\beta\in C^1(S, (0,\pi))$.
Let $V$ be an unoriented integral $n$-varifold on $\overline\Om$, $\Gamma$ a Radon measure on $G_{n,\beta}(S)$.
We say that $V$ has curvature and prescribed contact angle $\beta$ with $S$ along $\Gamma$, if there exist real-valued $V$-measurable functions $B_{ijk}$, for $i,j,k\in\{1,\ldots,n+1\}$, defined $V$-a.e., such that the following identity holds for any $\varphi\in C_c^1(\mfR^{n+1}\times\mfR^{(n+1)^2})$
\eq{
\int \left(P_{ij}D_j\varphi+B_{ijk}D^\ast_{jk}\varphi+B_{rir}\varphi\right)\rd V
=-\int\varphi\mfn_i\rd\Gamma.
}
Here $D_j$ denotes the partial derivative with respect to the variable $x_j$ in the first factor, and $D^\ast_{jk}$ with respect to the variable $P_{jk}$ in the second factor.

\end{definition}

With Proposition \ref{Prop:W_ia-propperties} we can show that
Definition \ref{Defn:OICV} is consistent with Definition \ref{Defn:CV} in the following sense.

\begin{proposition}\label{Prop:consistent-CV-OCV}
Let $\Om\subset\mfR^{n+1}$ be a bounded domain of class $C^2$, $\beta\in C^1(S, (0,\pi))$.
Let $V$ be an oriented integral $n$-varifold on $\overline\Om\subset\mfR^{n+1}$ with $\norm{V}$ $n$-rectifiable of class $C^2$; 
$\Gamma$ a Radon measure on $G^o_{n,\beta}(S)$ with $x\notin{\rm spt}\norm{\Gamma}$ for $\norm{V}$-a.e. $x$.
If $V$ has curvature $W_{ia}$ for $i,a\in\{1,\ldots,n+1\}$ and prescribed contact angle $\beta$ with $S$ along $\Gamma$ in the sense of Definition \ref{Defn:OICV}, then $\mfq_\#V$ has curvature $B_{ijk}$ for $i,j,k\in\{1,\ldots,n+1\}$ and prescribed contact angle $\beta$ with $S$ along $\mfq_\#\Gamma$ in the sense of Definition \ref{Defn:CV}, where $B_{ijk}$ is a $\mfq_\#V$-a.e. well-defined function, defined as $B_{ijk}\circ\mfq(x,v)\coloneqq-(W_{ik}v_j+W_{ij}v_k)(x,v)$ for $V$-a.e. $(x,v)$.
\end{proposition}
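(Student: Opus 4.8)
The plan is to pass the defining identity \eqref{eq:ICV-1stvariation-1} through the projection map $\mfq$, converting test functions on the oriented bundle into test functions on the unoriented bundle and carefully tracking how derivatives transform. First I would fix $\varphi\in C_c^1(\mfR^{n+1}\times\mfR^{(n+1)^2})$ and set $\phi\coloneqq\varphi\circ\mfq$, i.e. $\phi(x,v)=\varphi(x,(\de_{ij}-v_iv_j)_{ij})$; this is a legitimate $C_c^1$ test function on $\mfR^{n+1}\times\mfR^{n+1}$ up to the usual truncation near $v=0$, which is harmless since $\norm{V}$ and $\Gamma$ live on $\mfS^n$ in the second factor. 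By the chain rule, $D_a^\ast\phi(x,v)=-\sum_{j}(\de_{aj}v_k+\de_{ak}v_j)\,D_{jk}^\ast\varphi(x,\mfq(x,v)^{(2)})$, so that $W_{ia}D_a^\ast\phi=-\big(W_{ia}v_k+\text{(sym.)}\big)D_{ik}^\ast\varphi$ after relabeling; matching this against the proposed relation $B_{ijk}\circ\mfq\coloneqq-(W_{ik}v_j+W_{ij}v_k)$ is exactly the point. The first-factor term $(\de_{ij}-v_iv_j)D_j\phi=P_{ij}D_j\varphi\circ\mfq$ transcribes directly, and the boundary term becomes $-\int\varphi\,\mfn^o_i\rd\Gamma=-\int\varphi\,\mfn_i\rd\mfq_\#\Gamma$ by \eqref{eq:mfn^o-mfn} and the change-of-variables formula for push-forwards.

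Next I would handle the zeroth-order bulk term $-(W_{rr}v_i+W_{ri}v_r)\phi$. Here is where Proposition \ref{Prop:W_ia-propperties} is essential: under the standing hypothesis that $\norm{V}$ is $n$-rectifiable of class $C^2$ with $\norm{V}$-a.e.\ point off ${\rm spt}\norm{\Gamma}$, part $(c)$ gives $W_{ri}v_r=0$ and symmetry $W_{ij}=W_{ji}$ $V$-a.e., so the term reduces to $-W_{rr}v_i\phi$. On the other hand, with $B_{ijk}\circ\mfq=-(W_{ik}v_j+W_{ij}v_k)$ one computes $B_{rir}\circ\mfq=-(W_{rr}v_i+W_{ri}v_r)=-W_{rr}v_i$ using part $(c)$ again, so the zeroth-order term of \eqref{eq:ICV-1stvariation-1} matches $\int B_{rir}\varphi\circ\mfq\,\rd V=\int B_{rir}\varphi\,\rd\mfq_\#V$ exactly. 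Assembling the three matched pieces yields
\[
\int\left(P_{ij}D_j\varphi+B_{ijk}D_{jk}^\ast\varphi+B_{rir}\varphi\right)\rd\mfq_\#V
=-\int\varphi\,\mfn_i\rd\mfq_\#\Gamma,
\]
which is precisely Definition \ref{Defn:CV} for $\mfq_\#V$ with boundary $\mfq_\#\Gamma$ on $G_{n,\beta}(S)$ — the latter being a Radon measure on the correct bundle by \eqref{eq:mfq-G^o_beta-G_beta}.

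The main obstacle, and the step requiring the most care, is the \emph{well-definedness} of $B_{ijk}$ as a $\mfq_\#V$-a.e.\ function: a priori $B_{ijk}$ is defined on the oriented bundle by $B_{ijk}\circ\mfq(x,v)=-(W_{ik}v_j+W_{ij}v_k)$, and for this to descend along the two-to-one map $\mfq$ (which identifies $(x,v)$ with $(x,-v)$ over the rectifiable part) one needs the right-hand side to be \emph{even} in $v$. Since $W_{ia}$ are \emph{odd} functions by Proposition \ref{Prop:W_ia-propperties}, the products $W_{ik}v_j$ and $W_{ij}v_k$ are each even, so $B_{ijk}\circ\mfq$ is even and hence factors through $\mfq$; this is exactly why the oddness of the $W_{ia}$ — not just their existence — is needed. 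I would also remark that measurability of $B_{ijk}$ on $\mfq_\#V$ follows since $\mfq$ is a proper Lipschitz map and $W_{ia}\in L^1(V)$, and that the truncation near $v=0$ introduced when forming $\phi=\varphi\circ\mfq$ contributes nothing because $V$ is supported where $\abs v=1$. With these points settled the identity above holds for all admissible $\varphi$, completing the proof.
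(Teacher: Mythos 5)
Your proposal is correct and follows essentially the same route as the paper's proof: pull back the test function via $\phi=\varphi\circ\mfq$, apply the chain rule $D^\ast_a\phi=-(v_j\de_{ak}+v_k\de_{aj})D^\ast_{jk}\varphi$, use the oddness of the $W_{ia}$ from Proposition \ref{Prop:W_ia-propperties} to see that $-(W_{ik}v_j+W_{ij}v_k)$ is even and hence descends through $\mfq$, and identify the boundary term via \eqref{eq:mfn^o-mfn} and the push-forward. The only cosmetic differences are that your appeal to part $(c)$ to reduce the zeroth-order term to $-W_{rr}v_i$ is unnecessary (the identity $B_{rir}\circ\mfq=-(W_{rr}v_i+W_{ri}v_r)$ already matches it verbatim), and no truncation near $v=0$ is actually needed since $P_{ij}(v)=\de_{ij}-v_iv_j$ is polynomial in $v$.
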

\begin{proof}
For any $\varphi\in C_c^1(\mfR^{n+1}\times\mfR^{(n+1)^2})$, we test \eqref{eq:ICV-1stvariation-1} by $\phi(x,v)=\varphi(x,P(v))$ with $P_{ij}(v)=\de_{ij}-v_iv_j$.
Note that by chain rule, $D^\ast_a\phi=-(v_j\de_{ak}+v_k\de_{aj})D^\ast_{jk}\varphi$, thus
\eq{
\int\left(\left(\de_{ij}-v_iv_j\right)D_j\varphi-\left(W_{ik}v_j+W_{ij}v_k\right)D^\ast_{jk}\varphi-\left(W_{rr}v_i+W_{ri}v_r\right)\varphi\right)\rd V
=-\int\varphi\mfn_i^o\rd\Gamma,
}
where $\varphi$ and its derivatives are evaluated at $(x,P(v))$, which is $\mfq(x,v)$ by definition.

By Proposition \ref{Prop:W_ia-propperties}, the function $-(W_{ik}v_j+W_{ij}v_k)$ is even in $v$, thus it induces a $\mfq_\#V$-a.e. well-defined function, say $B_{ijk}$, by $B_{ijk}\circ\mfq(x,v)\coloneqq-(W_{ik}v_j+W_{ij}v_k)(x,v)$ for $V$-a.e. $(x,v)$.
By definition of push-forward, taking also \eqref{eq:mfn^o-mfn} into account, we find
\eq{
\int\left(P_{ij}D_j\varphi+B_{ijk}D^\ast_{jk}\varphi+B_{rir}\varphi\right)\rd(\mfq_\#V)
=-\int\varphi\mfn_i\rd(\mfq_\#\Gamma),
}
as required.
\end{proof}
\section{Proof of the main results}\label{Sec:4}
\subsection{Mass estimates}
We continue to assume that $\Om\subset\mfR^{n+1}$ is a bounded domain of class $C^2$, $\beta\in C^1(S, (0,\pi))$.

Using standard disintegration, any Radon measure $\Gamma$ on $G^o_{n,\beta}(S)$ can be written as $\Gamma=\norm{\Gamma}\otimes\Gamma^x$, where $\norm{\Gamma}=\pi_\#\Gamma$ is the Radon measure on $S$, $\Gamma^x$ is a Radon probability measure on $G^o_{n,\beta}(x)$ for $\norm{\Gamma}$-a.e. $x$.
Note that for the push-forward $\mfq_\#\Gamma$, which is a Radon measure on $G_{n,\beta}(S)$ thanks to \eqref{eq:mfq-G^o_beta-G_beta}, we have
$\norm{\Gamma}
=\norm{\mfq_\#\Gamma}$.

\begin{lemma}\label{Lem:compare-mass}
Let $\Om\subset\mfR^{n+1}$ be a bounded domain of class $C^2$, $\beta\in C^1(S, (0,\pi))$, let $p\geq1$.
There exists a positive constant $C=C(n,p,\beta,{\rm diam}(\Om),\abs{h_S}_{C^0(S)})<\infty$, such that:
given $V$ an oriented integral $n$-varifold in $\overline\Om\subset\mfR^{n+1}$ with $\norm{V}$ $n$-rectifiable of class $C^2$; $\Gamma$ a Radon measure on $G^o_{n,\beta}(S)$ with $x\notin{\rm spt}\norm{\Gamma}$ for $\norm{V}$-a.e. $x$.
If $V$ has curvature in $L^p$ (with curvature coefficient $W_{ia}$) and prescribed contact angle $\beta$ with $S$ along $\Gamma$ in the sense of Definition \ref{Defn:OICV}, then 
\eq{
\norm{V}(\overline\Om)
\leq C\left(\norm{\Gamma}(S)+\norm{W}_{L^p(V)}^p\right),\\
\norm{\Gamma}(S)
\leq C\left(\norm{V}(\overline\Om)+\norm{W}_{L^p(V)}^p\right),
}
where $\norm{W}_{L^p(V)}^p=\left(\int\abs{W}^p\rd V\right)$, and $\abs{W}$ is the Frobenius norm of the matrix $W=(W_{ia})$.
\end{lemma}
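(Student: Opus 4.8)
The plan is to derive both inequalities from the first variation formula for $\mfq_\#V$, tested against two suitably chosen vector fields, together with a Young-type absorption. As a preliminary I pass from Definition \ref{Defn:OICV} to the ``lifted'' first variation: since $V$ and $\Gamma$ are concentrated on the compact set $\overline\Om\times\mfS^n$, testing \eqref{eq:ICV-1stvariation-1} with $\phi(x,v)=\varphi(x)\psi(x)$, where $\psi\in C_c^\infty(\mfR^{n+1})$ equals $1$ near $\overline\Om$, shows that \eqref{eq:lift-1st-variation} holds for every $\varphi\in C^1(\mfR^{n+1})$ with $M_i=-(W_{rr}v_i+W_{ri}v_r)\in L^p(V)$. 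Proposition \ref{Prop:M_i-H_i} then gives $\de(\mfq_\#V)=\vec H\norm V-\mfn\,\mfq_\#\Gamma$, i.e.\ $\int{\rm div}_VX\rd\norm V=-\int\left<\vec H,X\right>\rd\norm V-\int\left<\mfn,X\right>\rd\mfq_\#\Gamma$ for $X\in C_c^1(\mfR^{n+1},\mfR^{n+1})$, with $\vec H$ given by \eqref{defn:H_i}. From \eqref{defn:H_i}, the triangle inequality, $\abs v=1$ and the representation \eqref{defn:V-OIV} one obtains $\int\abs{\vec H}\rd\norm V\le c(n)\int\abs W\rd V$; I shall also use that $\norm V(\overline\Om)<\infty$ ($V$ is a Radon measure with compact support), that $\int\abs{\mfn}\rd\mfq_\#\Gamma=\mfq_\#\Gamma\bigl(G_{n,\beta}(S)\bigr)=\norm\Gamma(S)$ (disintegration and $\abs{\mfn}\equiv1$, recalling $\norm{\mfq_\#\Gamma}=\norm\Gamma$), and the H\"older bound $\int\abs W\rd V\le\norm V(\overline\Om)^{1-\frac1p}\norm W_{L^p(V)}$.

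For the first inequality I test against the radial field $X(x)=x-x_0$ with $x_0\in\overline\Om$ fixed, so that ${\rm div}_VX\equiv n$ and $\abs X\le{\rm diam}(\Om)$ on $\overline\Om$. The identity above gives $n\norm V(\overline\Om)\le{\rm diam}(\Om)\bigl(c(n)\int\abs W\rd V+\norm\Gamma(S)\bigr)$. Inserting the H\"older bound and applying Young's inequality $ab\le\epsilon\,a^{p/(p-1)}+C(\epsilon,p)\,b^p$ with $\epsilon$ chosen so that ${\rm diam}(\Om)\,c(n)\,\epsilon\le\tfrac n2$, I absorb the resulting $\tfrac n2\norm V(\overline\Om)$ into the left-hand side — legitimate since $\norm V(\overline\Om)<\infty$ — and conclude $\norm V(\overline\Om)\le C\bigl(\norm\Gamma(S)+\norm W_{L^p(V)}^p\bigr)$.

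For the second inequality the test field must ``see'' the capillary boundary. Using the two-sided $C^2$-regularity of the signed distance $d_S$ to $S$ (with $d_S>0$ in $\Om$) on a tubular neighbourhood $\{\abs{d_S}<\rho_S\}$, where $\abs{\na^2d_S}\le c(n,\abs{h_S}_{C^0(S)})$, I take $X=\eta(d_S)\na d_S$ with $\eta\in C_c^\infty\bigl((-\rho_S,\rho_S)\bigr)$, $\eta\equiv1$ near $0$, $0\le\eta\le1$, $\abs{\eta'}\le c/\rho_S$. Then $X\in C_c^1(\mfR^{n+1},\mfR^{n+1})$ with $\abs X\le1$ and $\norm X_{C^1}\le c(n,\abs{h_S}_{C^0(S)})$, hence $\abs{{\rm div}_VX}\le c(n,\abs{h_S}_{C^0(S)})$, while $X=\na d_S=\nu^S$ on $S$. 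Since $\mfq_\#\Gamma$ is concentrated on $G_{n,\beta}(S)$ and, by \eqref{defn:mfn} together with \eqref{eq:P^m(nu^S)}, $\left<\mfn(x,P),\nu^S(x)\right>=\sin\beta(x)\ge c_\beta>0$ there (with $c_\beta=\min_S\sin\beta>0$ by compactness of $S$), the first variation identity applied to this $X$ gives
\[
c_\beta\norm\Gamma(S)\le\int\left<\mfn,X\right>\rd\mfq_\#\Gamma=-\int{\rm div}_VX\rd\norm V-\int\left<\vec H,X\right>\rd\norm V\le c(n,\abs{h_S}_{C^0(S)})\norm V(\overline\Om)+\int\abs{\vec H}\rd\norm V.
\]
The same H\"older--Young step as above (now with nothing to absorb into $\norm\Gamma(S)$) then produces $\norm\Gamma(S)\le C\bigl(\norm V(\overline\Om)+\norm W_{L^p(V)}^p\bigr)$, and taking $C$ to be the larger of the two constants gives the asserted dependence $C=C(n,p,\beta,{\rm diam}(\Om),\abs{h_S}_{C^0(S)})$.

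The divergence computations and the absorption are routine; the one step needing care is the construction of the boundary-detecting field $X$: one must check that it is genuinely $C^1$ across $S$ — this is why one uses the two-sided $C^2$-regularity of $d_S$ rather than a naive one-sided extension — and that its $C^1$-norm, and hence the final constant, depends only on $n$ and $\abs{h_S}_{C^0(S)}$ through $\rho_S$.
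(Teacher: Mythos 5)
Your proof is correct and follows essentially the same route as the paper: test the (lifted) first variation with the position field $x-x_0$ to bound $\norm{V}(\overline\Om)$, and with a cut-off multiple of $\na d_S$ near $S$ together with $\left<\mfn^o,\nu^S\right>=\sin\beta\ge\min_S\sin\beta>0$ to bound $\norm{\Gamma}(S)$, finishing in both cases with H\"older and Young absorption. The only cosmetic difference is that you route through Proposition \ref{Prop:M_i-H_i} and bound $\abs{\vec H}$ by $c(n)\abs{W}$ directly, whereas the paper plugs the same $v$-independent test functions straight into Definition \ref{Defn:OICV}; your remark on the two-sided $C^1$-regularity of the boundary test field is a welcome extra care.
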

\begin{proof}
The proof follows from \cite{KM22,WZ}.
Taking $\phi_i(x,v)=\left<x-x_0,e_i\right>$ for some $x_0\in\Om$ in \eqref{eq:ICV-1stvariation-1}, then summing over $i\in\{1,\ldots,n+1\}$ we obtain
\eq{
n\norm{V}(\overline\Om)
\leq{\rm diam}(\Om)\left(\norm{\Gamma}(S)+\norm{W}_{L^1(V)}\right).
}
By Young's inequality we have for any $p\in(1,\infty)$ and $\ep>0$
\eq{
n\norm{V}(\overline\Om)
\leq{\rm diam}(\Om)\left(\norm{\Gamma}(S)+\frac1p\ep^{-p}\norm{W}_{L^p(V)}^p+\frac{p-1}p\ep^\frac{p}{p-1}\norm{V}(\overline\Om)\right).
}
Choosing $\ep=\ep(n,p,{\rm diam}(\Om))$ sufficiently small, we could absorb the last term on the Right and get
\eq{
\norm{V}(\overline\Om)
\leq C(n,p,{\rm diam}(\Om))\left(\norm{\Gamma}(S)+\norm{W}_{L^p(V)}^p\right)
}
as required.

On the other hand, we test \eqref{eq:ICV-1stvariation-1} with $\phi_i(x,v)=l(x)\left<\na d_s(x),e_i\right>\eqqcolon l(x)\na_id_S(x)$, where $d_S$ (recall \eqref{eq:one-sided-tubular}) is the signed distance function from $S$ (positive in $\Om$) and $0\leq l\leq1$ is a cut-off function with $l_{\mid_S}=1$ and supported in $U^+_\de(S)$ for $\de=\min\{1,\frac{\rho_S}2\}$.
Clearly, such a $\phi$ is admissible in \eqref{eq:ICV-1stvariation-1}.
This yields, after summing over $i\in\{1,\ldots,n+1\}$ and thanks to \eqref{eq:<n^o,nu^S>=sinbeta}, that
\eq{
\norm{\Gamma}(S)
\leq&\frac1{\min_{x\in S}\sin\beta(x)}\int\left<\mfn^o(x,v),\nu^S(x)\right>\rd\Gamma(x,v)\\
=&\frac1{\min_{x\in S}\sin\beta(x)}\sum_{i=1}^{n+1}\int\mfn^o_i\phi_i\rd\Gamma\\
\overset{\eqref{eq:ICV-1stvariation-1}}{=}&-\frac1{\min_{x\in S}\sin\beta(x)}\sum_{i=1}^{n+1}\int\left((\de_{ij}-v_iv_j)D_j(l\na_id_S)-(W_{rr}v_i+W_{ri}v_r)l\na_id_S\right)\rd V\\
\leq&C(n,\beta,\abs{h_S}_{C^0(S)})\left(\norm{V}(\overline\Om)+\norm{W}_{L^1(V)}\right),
}
where we have used Proposition \ref{Prop:W_ia-propperties} ($c$) and Cauchy-Schwarz inequality in the last inequality.
More precisely, the last part of the sum reads
\eq{
-\int\sum_r(W_{rr})\left<v,l\na d_S\right>\rd V
=-\int{\rm tr}W\left<v,l\na d_S\right>\rd V
\leq C(n,\Om)\int\abs{W}\rd V.
}
This proves the required estimate for $p=1$.
By Young's inequality we have for any $p\in(1,\infty)$
\eq{
\norm{W}_{L^1(V)}
\leq\frac1p\ep^{-p}\norm{W}_{L^p(V)}^p+\frac{p-1}p\ep^\frac{p}{p-1}\norm{V}(\overline\Om).
}
Taking $\ep=\frac12$ for example,
the required estimate for general $p$ then follows.
\end{proof}
\subsection{Prescribed mean curvature problem}

\begin{proof}[Proof of Theorem \ref{Thm:main}]
We divide the proof into the following steps.

\noindent{\bf Step 1. We prove several facts which follow directly from the assumptions.}

For $l\in\mbN$, by assumption $E_l\subset\Om$ is an open set of finite perimeter since $M_l=\p E_l\cap\Om$ and $D_l=\p E_l\cap\p\Om$ are $C^2$-hypersurfaces.

Write $\llbracket E_l \rrbracket=\mfc(E_l,e_1\wedge\ldots\wedge e_{n+1},1)$ as the natural integral current associated to $E_l$, 
$\llbracket\p E_l\rrbracket$ as the multiplicity-$1$ $n$-current associated to $\p E_l$, such that the orientation is consistent with $\nu_{E_l}$ (the measure-theoretic inwards pointing unit normal to $\p E_l$, which is by regularity $\nu_l$ on $M_l$ and $\nu^S$ on $D_l\setminus\p M_l$),
and similarly $\llbracket\p E_l\cap\Om\rrbracket$, $\llbracket\p E_l\cap\p\Om\rrbracket$ as the multiplicity-$1$ $n$-current associated respectively to $M_l$, $D_l$.

Let $V_l=\mfv(M_l,\nu_l,1,0)$ be oriented integral $n$-varifold associated to the $C^2$-hypersurface $M_l$, $\Gamma_l=\mcH^{n-1}\llcorner\p M_l\otimes\de_{\nu_l(x)}$ be Radon measure.
By \eqref{condi:capillary-angle} and Remark \ref{Rem:C^2-surface}, $V_l$ has curvature $W_{ia}$ and prescribed contact angle $\beta$ with $S$ along $\Gamma$ in the sense of Definition \ref{Defn:OICV}, where $W^l_{ia}(x,v)=\na_{e_i^T}\left<\nu_l,e_a\right>$ for $(x,v)$ such that $v=\nu_l(x)$.
In particular, we find that
\eq{\label{ineq:curvature-bound-W}
\sup_{l\in\mbN}\left\{\int\abs{W_{ia}^l}^p\rd V_l\right\}<\infty,\quad\forall i,a\in\{1,\ldots,n+1\},
}
thanks to the uniform curvature bound \eqref{condi:curvature}.
\begin{enumerate}
    \item By \eqref{condi:area-function} and the fact that $S$ is compact, $E_l$ is an open set of finite perimeter for $l\in\mbN$, with perimeter bounded uniformly from above:
    \eq{
    \sup_{l\in\mbN}\{\mcH^n(\p E_l)\}<\infty.
    }
    By the first convergence in \eqref{condi:set-convergence} and lower semicontinuity of perimeter (see \cite[Proposition 12.15]{Mag12}), we conclude that $E\subset\Om$ is a set of finite perimeter, and $E_l$ converges as sets of finite perimeter in $\mfR^{n+1}$ to $E$.
    In particular, this implies that
    $\llbracket\p E_l\rrbracket$ converges as $n$-currents to $\llbracket\p^\ast E\rrbracket$.
    \item By the second convergence in \eqref{condi:set-convergence}, we have the corresponding convergence of currents:
    $\llbracket\p E_l\cap\p\Om\rrbracket$ converges as $n$-currents to $\llbracket\p^\ast E\cap\p\Om\rrbracket$.
    It follows from the decomposition
    \eq{
    \llbracket\p E_l\rrbracket
    =\llbracket\p E_l\cap\p\Om\rrbracket+\llbracket\p E_l\cap\Om\rrbracket,\quad
    \llbracket\p^\ast E\rrbracket
    =\llbracket\p^\ast E\cap\p\Om\rrbracket+\llbracket\p^\ast E\cap\Om\rrbracket
    }
    and the above convergences
    that
    $\mfc(V_l)=\llbracket\p E_l\cap\Om\rrbracket$ converges as currents to $\llbracket\p^\ast E\cap\Om\rrbracket$.
    \item For $l\in\mbN$, by virtue of Remark \ref{Rem:C^2-surface}, taking into account the uniform bounds on area and curvature (see \eqref{condi:area-function}, \eqref{ineq:curvature-bound-W}), we could apply Lemma \ref{Lem:compare-mass} to $V_l$ and $\Gamma_l$ then deduce that
    \eq{\label{eq:bound-partial-M_l}
    \sup_{l\in\mbN}\{\mcH^{n-1}(\p M_l)\}<\infty.
    }
    Note that $D_l$ is a closed set in $S$ with $C^2$-boundary $\p M_l$ (so of course a set of finite perimeter in $S$).
    By virtue of \eqref{eq:bound-partial-M_l}, the second convergence in \eqref{condi:set-convergence}, and again the lower semicontinuity of perimeter, we conclude that $D=\p^\ast E\cap\p\Om$ is a set of finite perimeter in $S$.
    \item For $l\in\mbN$, let $\S_l=\mcH^{n-1}\llcorner\p M_l\otimes\de_{T_x\p M_l}$ denote the unoriented (multiplicity $1$) integral $(n-1)$-varifold associated to $\p M_l$, viewed as a $C^2$-hypersurface in $S$.
    For any $C^1$-vector field $\varphi$ which is tangential to $S$, it is easy to see that
    \eq{
    \de\S_l(\varphi)
    =\int\left<\varphi,H_\p^l\right>\rd\norm{\S_l}.
    }
    By \eqref{eq:bound-partial-M_l} and \eqref{condi:mean-curvature}, we could use Allard's integral compactness theorem for unoriented varifolds (see \cite{Allard72}) to conclude that, there exists an integral $(n-1)$-varifold $\S$,
    to which $\S_l$ subsequentially converges to.
    In particular, by definition of push-forward measure this means $\norm{\S_l}=\mcH^{n-1}\llcorner\p M_l=\norm{\Gamma_l}$ subsequentially converges to $\norm{\S}$, an integer multiplicity $(n-1)$-rectifiable measure on $S$.
\end{enumerate}

\noindent{\bf Step 2. We prove the initial compactness in the context of oriented integral curvature varifolds with capillary boundary.}

Our goal is to apply Hutchinson's compactness theorem to $\{V^l\}$, to this end we need to verify \eqref{eq:Hutchison-bound}.

First, for each $l\in\mbN$, we have by construction $\norm{\mfq_\#V_l}=\mcH^n\llcorner M_l$.
Using \eqref{condi:area-function} we easily find
\eq{
\sup_{l\in\mbN}\{\norm{\mfq_\#V_l}(\overline\Om)\}<\infty.
}

Second, since $M_l$ is a capillary $C^2$-hypersurface, direct computation shows that the first variation of $\mfq_\#V_l$ is given by
\eq{
\de(\mfq_\#V_l)
=\vec H^l\mcH^n\llcorner M_l-\mfn^o(x,\nu_l(x))\mcH^{n-1}\llcorner\p M_l,
}
where $\vec H^l$ is the mean curvature of $M$, $\mfn^o(x,\nu_l(x))$ is exactly the inner unit co-normal to $\p M_l\subset M_l$ at $x\in\p M_l$, thanks to the contact angle condition \eqref{condi:capillary-angle}.
This gives
\eq{
\norm{\de(\mfq_\#V_l)}
=\abs{\vec H^l}\mcH^n\llcorner M_l+\mcH^{n-1}\llcorner\p M_l,
}
so that
we can use Cauchy-Schwarz inequality and H\"older inequality to find
\eq{
\norm{\de(\mfq_\#V_l)}(\overline\Om)
=&\int_{M_l}\abs{\vec H^l}\rd\mcH^n+\mcH^{n-1}(\p M_l)
\leq C(n)\int_{M_l}\abs{B^l}\rd\mcH^n+\mcH^{n-1}(\p M_l)\\
\leq&C(n)\left(\int_{M_l}\abs{B^l}^p\rd\mcH^n\right)^\frac1p\mcH^n(M_l)^{1-\frac1p}+\mcH^{n-1}(\p M_l).
}
%
By \eqref{condi:area-function}, \eqref{condi:curvature}, and \eqref{eq:bound-partial-M_l} we conclude,  
\eq{
\sup_{l\in\mbN}\left\{\norm{\de(\mfq_\#V_l)}(\overline\Om)\right\}<\infty.
}

Third, by construction we have $\mfc(V_l)=\mfc(M_l,\nu_l,1)$.
Thanks again to the fact that $M_l$ is a $C^2$-hypersurface with boundary $\p M_l$, by classical Stokes Theorem we know that $\mbM(\p\mfc(V_l))=\mcH^{n-1}(\p M_l)$, so that by \eqref{eq:bound-partial-M_l} we find
\eq{
\sup_{l\in\mbN}\{\mbM(\p\mfc(V_l))\}<\infty.
}
In particular, \eqref{eq:Hutchison-bound} is verified.
Applying Hutchinson's compactness theorem (Theorem \ref{Thm:Hutchinson-cpt}) we deduce, there exists an oriented integral $n$-varifold $V$ on $\overline\Om$, to which $V_l$ subsequentially converges in the sense of oriented varifolds.
It follows that $\mfc(V_l)$ converges (after passing to the said subsequence) as currents to $\mfc(V)$, as discussed in Section \ref{Sec:2-current}.
Taking Item (2) of {\bf Step 1} into account, we deduce
\eq{\label{eq:c(V)}
\mfc(V)
=\llbracket\p^\ast E\cap\Om\rrbracket,
}
and hence $V$ takes the form
\eq{
V=\mfv(\p^\ast E\cap\Om,\nu_E,1,0)+\mfv(\mcR,\nu,\theta,\theta)
}
for an $n$-rectifiable set $\mcR$ on $\overline\Om$; a measurable choice of unit normal $\nu$ on $\mcR$; and a $\mcH^n\llcorner\mcR$-integrable function $\theta:\mcR\ra\mbN\setminus\{0\}$.
Let us set $R=\p^\ast E\cap\Om\cup\mcR$ and write correspondingly $V$ as
\eq{\label{eq:V-R-xi-theta_1-theta_2}
V
=\mfv(R,\xi,\theta_1,\theta_2),
}
where $\xi$ is the measurable choice of orientation on $R$, which agrees with $\nu_E$ on $\p^\ast E\cap\Om$ and with $\nu$ on $\mcR\setminus(\p^\ast E\cap\Om)$;
$(\theta_1,\theta_2)$ equals $(1,0)$ on $(\p^\ast E\cap\Om)\setminus\mcR$, $(\theta+1,\theta)$ on $\p^\ast E\cap\Om\cap\mcR$, and $(\theta,\theta)$ on $\mcR\setminus(\p^\ast E\cap\Om)$.
This proves the first part of ($ii$).

On the other hand, concerning boundary measures $\Gamma_l$, we know by \eqref{condi:capillary-angle} that $\Gamma_l$ is a Radon measure on $G^o_{n,\beta}(S)$ for $l\in\mbN$.
Moreover, by construction $\norm{\Gamma_l}=\mcH^{n-1}\llcorner\p M_l$ and thanks again to \eqref{eq:bound-partial-M_l},
\eq{
\sup_{l\in\mbN}\{\Gamma_l(G^o_{n,\beta}(S))\}<\infty.
}
By weak-star compactness of Radon measures, there exists a Radon measure $\Gamma$ on $G^o_{n,\beta}(S)$, to which $\Gamma_l$ subsequentially converges to.
Note that 
\eq{
\norm{\Gamma_l}=\mcH^{n-1}\llcorner\p M_l
=\mcH^{n-1}\llcorner(\p^\ast D_l)
}
also subsequentially converges to $\norm{\Gamma}$ by definition of push-forward.

On one hand, by virtue of Item (4) of {\bf Step 1}, (upon extracting subsequence in the previous argument) we conclude that $\norm{\Gamma}=\norm{\S}$ is an $(n-1)$-rectifiable measure with integer multiplicity.
Namely,
there exists an $(n-1)$-rectifiable set $R_S\subset S$ and a positive integer-valued integrable Borel function $\tau:R_S\ra\mfR$ such that
\eq{
\Gamma
=\tau\mcH^{n-1}\llcorner R_S\otimes\Gamma^x,
}
which in turn implies that (since $\norm{V}$ is $n$-rectifiable)
\eq{\label{eq:norm(V)(S)=0}
x\notin{\rm spt}\norm{\Gamma}\text{ for } \norm{V}\text{-a.e. } x.
}
Note that \eqref{eq:bdry-contact-weaksense} follows from \eqref{eq:<n^o,nu^S>=sinbeta}.
($iii$) is thus proved.

On the other hand,
by Item (3) of {\bf Step 1}, we can use the lower semicontinuity of perimeter to find, for those $\rho$ such that $\norm{\Gamma}(\p B_\rho(y))=0$ (which holds for a.e. $\rho>0$ by \cite[Proposition 2.16]{Mag12})
\eq{
\mcH^{n-1}\llcorner(\p^\ast D)(B_\rho(y))
\leq&\liminf_{l\ra\infty}\mcH^{n-1}\llcorner(\p^\ast D_l)(B_\rho(y))\\
=&\liminf_{l\ra\infty}\norm{\Gamma_l}(B_\rho(y))
=\norm{\Gamma}(B_\rho(y)),
}
which shows the implication \eqref{impli:partialD-Gamma}.

\noindent{\bf Step 3. We prove subsequential compactness of $\vec\mu_l=\vec W^lV_l$ in the sense of vector-valued Radon measures, where $\vec W^l$ is the vector with $(n+1)^2$ entries $\{W_{ia}^l\}_{i,a}$.}

In this step we work with the subsequence of $l\in\mbN$ resulting from the subsequential compactness results proved in {\bf Step 2}.

Using H\"older inequality we get
\eq{
\abs{\vec \mu_l}(\mfR^{n+1}\times\mfR^{n+1})
=\int\abs{\vec W^l}\rd V^l
\leq\left(\int\abs{\vec W^l}^p\rd V^l\right)^\frac1p\left(\mcH^n(M_l))\right)^{1-\frac1{p}},
}
and hence by \eqref{ineq:curvature-bound-W}, \eqref{condi:area-function},
\eq{
\sup_{l\in\mbN}\left\{\abs{\vec \mu_l}(\mfR^{n+1}\times\mfR^{n+1})\right\}<\infty.
}

By weak-star compactness (see e.g., \cite[Corollary 4.34]{Mag12}), there exists a vector-valued Radon measure $\vec\mu$, to which $\vec\mu_l$ subsequentially (after extracting a further subsequence) converges to.
By virtue of {\bf Step 2}, $V^l$ subsequentially converges to $V$ as Radon measures.

Then we argue as in the proof of \cite[Proposition 4.30]{Mag12}, see also \cite{Bellettini23}.
Consider bounded open set $A\subset\mfR^{n+1}\times\mfR^{n+1}$, and let $A_t=\{x\in A:{\rm dist}(x,\p A)>t\}$ for $t>0$.
Let $\phi\in C_c^1(A,[0,1])$ be such that $\chi_{A_t}\leq\phi$, then we have (put $p'=\frac{p}{p-1}$, $\Lambda=\sup_{l\in\mbN}\left\{\int\abs{\vec W^l}^p\rd V^l\right\}$ thanks to \eqref{ineq:curvature-bound-W})
\eq{
\abs{\vec\mu}(A_t)
\leq&\liminf_{l\ra\infty}\abs{\vec\mu_l}(A_t)
\leq\liminf_{l\ra\infty}\int\phi\abs{\vec W^l}\rd V_l
\leq\liminf_{l\ra\infty}\left(\int\abs{\vec W^l}^p\rd V^l\right)^\frac1p\left(\int\phi^{p'}\rd V^l\right)^{\frac1{p'}}\\
\leq&\Lambda^\frac1p\liminf_{l\ra\infty}\left(\int\phi^{p'}\rd V^l\right)^{\frac1{p'}}
\leq \Lambda^\frac1p\left(\int\phi^{p'}\rd V\right)^\frac1{p'}
\leq\Lambda^\frac1pV(A)^\frac1{p'},
}
and hence we get $\abs{\vec\mu}(A)\leq\Lambda^\frac1pV(A)^\frac1{p'}$ for any Borel set $A$,
implying that ${\vec\mu}$ is absolutely continuous with respect to $V$ and, by virtue of \cite[Corollary 5.11]{Mag12}, is of the type $\vec\mu=\vec WV$ for $\vec W\in L^1(V)$, with entries denoted by $W_{ia}$, $i,a\in\{1,\ldots,n+1\}$.

To show that $\vec W\in L^p(V)$, we just have to apply \cite[Theorem 9.6]{Mantegazza96} for the subsequential convergences $V_{l}\ra V$ and $\vec W^lV_l\wsc\vec WV$ shown above, with the convex and lower semicontinuous function $f$ therein chosen specifically as
$f(\xi)=\abs{\xi}^p, \forall\xi\in\mfR^{n+1}$.

Note that for $l\in\mbN$, by construction $V_l,\Gamma_l$ satisfies Definition \ref{Defn:OICV}, and hence
the identity \eqref{eq:ICV-1stvariation-1} holds for any $\phi\in C_c^1(\mfR^{n+1}\times\mfR^{n+1})$ and each $l$:
\eq{\label{eq:1st-variation-V_l}
\int\left(\left(\de_{ij}-v_iv_j\right)D_j\phi+W^l_{ia}D^\ast_a\phi-\left(W^l_{rr}v_i+W^l_{ri}v_r\right)\phi\right)\rd V_l
=-\int\phi\mfn_i^o\rd\Gamma_l.
}
Using the fact that $\vec W^lV_l$ and $\Gamma_l$ subsequentially converges to $\vec WV$ and $\Gamma$ respectively, we can then pass the above identity to the limit and obtain
\eq{\label{eq:variation-V-1}
\int\left(\left(\de_{ij}-v_iv_j\right)D_j\phi+W_{ia}D^\ast_a\phi-\left(W_{rr}v_i+W_{ri}v_r\right)\phi\right)\rd V
=-\int\phi\mfn_i^o\rd\Gamma,
}
which proves that the oriented integral $n$-varifold $V$ has curvature $W_{ia}$ and prescribed contact angle $\beta$ with $S$ along $\Gamma$ in the sense of Definition \ref{Defn:OICV}.
($i$) is thus proved.

\noindent{\bf Step 4. We conclude the proof by using the prescribed mean curvature property.}

We first recall,
for $l\in\mbN$,
since $M_l$ is a $C^2$-hypersurface, we have by Remark \ref{Rem:C^2-surface} that $W_{ia}^l(x,v)=\na_{e_i^T}\left<\nu_l,e_a\right>$ for $(x,v)$ such that $v=\nu_l(x)$, which is defined for all points on the support of $V_l$.
It follows that $-\sum_rW^l_{rr}=-{\rm tr}W^l$ is the mean curvature of $M_l$ and is by assumption prescribed by $g_l$, and $\sum_rW_{ri}v_r=0$ for $(x,v)$ such that $v=\nu_l(x)$.

Thus for test function $\phi(x,v)$ \eqref{eq:1st-variation-V_l} reads
\eq{
\int\left(\left(\de_{ij}-v_iv_j\right)D_j\phi
+W^l_{ia}D^\ast_a\phi
+g_lv_i\phi\right)\rd V_l
=-\int\phi\mfn_i^o\rd\Gamma_l.
}
By \eqref{condi:area-function} we have $g_lv_i\ra gv_i$ in $C^0(\overline\Om\times\mfR^{n+1})$, using {\bf Step 3} we can then pass the above identity to the subsequential limit and obtain
\eq{\label{eq:variation-V-2}
\int\left(\left(\de_{ij}-v_iv_j\right)D_j\phi
+W_{ia}D^\ast_a\phi
+gv_i\phi\right)\rd V
=-\int\phi\mfn_i^o\rd\Gamma.
}
Comparing \eqref{eq:variation-V-2} with \eqref{eq:variation-V-1}, we obtain
\eq{
-\left(W_{rr}v_i+W_{ri}v_r\right)
=gv_i,\quad V\text{-a.e.}
}
Moreover, for $\phi(x,v)=\varphi(x)$, \eqref{eq:variation-V-2} reads
\eq{
\int\left(\left(\de_{ij}-v_iv_j\right)D_j\varphi+gv_i\varphi\right)\rd V
=-\int\varphi\mfn_i^o\rd\Gamma,
}
we can then use Proposition \ref{Prop:M_i-H_i} to conclude that
\eq{
\de(\mfq_\#V)
=\vec H\norm{V}-\mfn\mfq_\#\Gamma,
}
with $\vec H=(H_1,\ldots,H_{n+1})\in L^1(V)$ where
\eq{
H_i(x)
=\frac{\theta_1(x)M_i(x,\xi(x))+\theta_2(x)M_i(x,-\xi(x))}{(\theta_1+\theta_2)(x)}
}
for $M_i(x,v)=gv_i\in L^1(V)$ (in fact, $L^\infty(V)$) and $\xi,(\theta_1,\theta_2)$ as in \eqref{eq:V-R-xi-theta_1-theta_2}, therefore
\begin{align}
\vec H(x)
=\frac{\theta_1(x)g(x)\xi(x)-\theta_2(x)g(x)\xi(x)}{(\theta_1+\theta_2)(x)}\\
=
\begin{cases}
    g(x)\nu_E(x),\quad&\text{on }(\p^\ast E\cap\Om)\setminus\mcR,\\
    \frac{g(x)\nu_E(x)}{2\theta(x)+1},\quad&\text{on }\p^\ast E\cap\Om\cap\mcR,\label{eq:vecH-expression}\\
    0,\quad&\text{on }\mcR\setminus(\p^\ast E\cap\Om).
\end{cases}
\end{align}

In particular, this shows that the first variation $\de(\mfq_\#V)$ is of the required form (the first part of ($iii$)), with
$\norm{\de(\mfq_\#V)}=\abs{\vec H}\norm{V}+\norm{\mfq_\#\Gamma}$ a Radon measure.
By
\cite[Theorem 1]{Menne13}, we conclude that $\norm{V}$ is $n$-rectifiable of class $C^2$, which finishes the proof of ($ii$).

Recall \eqref{eq:V-R-xi-theta_1-theta_2}, \eqref{eq:norm(V)(S)=0}, and the fact that $V,\Gamma$ satisfies Definition \ref{Defn:OICV}, we can now apply Proposition \ref{Prop:W_ia-propperties} ($a$) and Remark \ref{Rem:even-lift} to find that
\eq{
H_i(x)
=&M_i(x,\xi(x))
=g(x)\left<\nu_E(x),e_i\right>
\text{ for }\norm{V}\text{-a.e. } x\in\p^\ast E\cap\Om,\\
H_i(x)
=&M_i(x,\xi(x))
=g(x)\left<\nu(x),e_i\right>\text{ for }\norm{V}\text{-a.e. } x\in\mcR\setminus(\p^\ast E\cap\Om),
}
which, in conjunction with the expressions in \eqref{eq:vecH-expression}, shows that $(\theta_1(x),\theta_2(x))=(1,0)$ (or equivalently, $\theta(x)=0$) for $\norm{V}$-a.e. $x\in\{x\in\overline\Om:g(x)\neq0\}$.
Thus, upon removing a set of $\mcH^n$-measure zero, we could assume that $\mcR\subset\{x\in\overline\Om:g(x)=0\}$.
This in turn shows that the expression of $\vec H$ is of the form
\eq{
\vec H(x)
=
\begin{cases}
    g(x)\nu_E(x),\quad&\text{on }(\p^\ast E\cap\Om)\setminus\mcR,\\
    0,\quad&\text{on }\mcR,
\end{cases}
}
as required, which finishes the proof of ($iv$),
and completes the proof.
\end{proof}

\begin{remark}
\normalfont
As shown in {\bf Step 1} in the proof of Theorem \ref{Thm:main}, condition \eqref{condi:set-convergence} is mainly used to show that $\mfc(V_l)=\llbracket\p E_l\cap\Om\rrbracket$ converges as $n$-currents to $\llbracket\p^\ast E\cap\Om\rrbracket$.
In this regard, the set convergences in \eqref{condi:set-convergence} can be replaced by the convergence of sets of finite perimeter under the so-called \textit{$\mathbf{F}$-distance} studied in \cite{LZZ24}, see in particular \cite[Lemma 2.2]{LZZ24}.
\end{remark}

The following general compactness theorem for $C^2$-hypersurfaces with capillary boundary is essentially proved above.
We record it since it may be considered of independent interest.

\begin{theorem}\label{Thm:general-cpt}
Let $\Om\subset\mfR^{n+1}$ be a bounded domain of class $C^2$, $S\coloneqq\p\Om$ with $\nu^S$ the inner unit normal to $\Om$ along $S$, $\beta\in C^1(S, (0,\pi))$.
For $l\in\mbN$, let $E_l\subset\Om$ be open subset with
$M_l\coloneqq\p E_l\cap\Om$ a $C^2$-hypersurface (embedded in $\mfR^{n+1}$), such that $M_l$ meets $\p\Om$ transversally along $\p M_l$ with contact angle prescribed by the function $\beta$ in the following sense: let $\nu_l$ denote the inner unit normal to $E_l$ along $\overline{M_l}$, then
\eq{
\left<\nu_l(x),\nu^S(x)\right>=-\cos\beta(x),\quad\forall x\in\p M_l;
}
and (Denote by $B^l$ the classical second fundamental form of $M_l$) for some $p>1$ fixed, there holds
\eq{
\sup_{l\in\mbN}\left\{\mcH^n(M_l)+\int_{M_l}\abs{B^l}^p\rd\mcH^n\right\}<\infty.
}
Then there exists an oriented integral $n$-varifold $V$, a Radon measure $\Gamma$ on $G^o_{n,\beta}(S)$, to which $V_l=\mfv(M_l,\nu_l,1,0)$ and $\Gamma_l=\mcH^{n-1}\llcorner\p M_l\otimes\de_{\nu_l(x)}$ subsequentially converges to, such that
$V$ has curvature $W_{ia}\in L^p(V)$ for $i,a\in\{1,\ldots,n+1\}$ and prescribed contact angle $\beta$ with $S$ along $\Gamma$ in the sense of Definition \ref{Defn:OICV}.
\end{theorem}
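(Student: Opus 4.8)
The plan is to observe that this statement is precisely what \textbf{Steps 1--3} of the proof of Theorem \ref{Thm:main} establish once one discards the data $g_l$ and the hypotheses \eqref{condi:set-convergence} and \eqref{condi:mean-curvature}; I would therefore reproduce those steps in streamlined form. First, by Remark \ref{Rem:C^2-surface} each $V_l=\mfv(M_l,\nu_l,1,0)$ has curvature coefficients $W^l_{ia}(x,v)=\na_{e_i^T}\langle\nu_l,e_a\rangle$ (for $v=\nu_l(x)$) and prescribed contact angle $\beta$ with $S$ along $\Gamma_l=\mcH^{n-1}\llcorner\p M_l\otimes\de_{\nu_l(x)}$ in the sense of Definition \ref{Defn:OICV}; since $\abs{W^l}$ is controlled pointwise by $\abs{B^l}$, the assumed uniform bound on $\int_{M_l}\abs{B^l}^p\rd\mcH^n$ gives $\sup_{l}\int\abs{W^l_{ia}}^p\rd V_l<\infty$. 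Because $\norm{V_l}=\mcH^n\llcorner M_l$ is $n$-rectifiable of class $C^2$ and ${\rm spt}\,\norm{\Gamma_l}=\p M_l\subset S$ is disjoint from $M_l\subset\Om$, Lemma \ref{Lem:compare-mass} applies to the pair $(V_l,\Gamma_l)$ and, together with the area bound, yields $\sup_{l}\mcH^{n-1}(\p M_l)<\infty$. This \emph{a priori} control of the boundary mass, which is not among the hypotheses, is the one genuinely non-routine point, and I regard it as the crux of the argument.

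Next I would verify the hypotheses of Hutchinson's compactness theorem (Theorem \ref{Thm:Hutchinson-cpt}) for $\{V_l\}$. One has $\norm{\mfq_\#V_l}(\overline\Om)=\mcH^n(M_l)$, which is uniformly bounded; the first variation $\de(\mfq_\#V_l)=\vec H^l\mcH^n\llcorner M_l-\mfn^o(\cdot,\nu_l(\cdot))\,\mcH^{n-1}\llcorner\p M_l$ has total mass $\int_{M_l}\abs{\vec H^l}\rd\mcH^n+\mcH^{n-1}(\p M_l)$, which by $\abs{\vec H^l}\le C(n)\abs{B^l}$, H\"older's inequality, and the bounds just collected is uniformly bounded; and $\mbM(\p\mfc(V_l))=\mcH^{n-1}(\p M_l)$ is uniformly bounded by Stokes' theorem. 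Hutchinson's theorem then produces an oriented integral $n$-varifold $V$ on $\overline\Om$ with $V_l\to V$ (along a subsequence) as oriented varifolds. Since $S$ is compact, $G^o_{n,\beta}(S)$ is compact and $\Gamma_l(G^o_{n,\beta}(S))=\mcH^{n-1}(\p M_l)$ is uniformly bounded, so weak-$\ast$ compactness of Radon measures yields, along a further subsequence, a Radon measure $\Gamma$ on $G^o_{n,\beta}(S)$ with $\Gamma_l\wsc\Gamma$.

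Finally I would identify the curvature of the limit. Setting $\vec\mu_l=\vec W^lV_l$, H\"older's inequality with the uniform $L^p$ bound on $\vec W^l$ and the area bound give $\sup_l\abs{\vec\mu_l}(\mfR^{n+1}\times\mfR^{n+1})<\infty$, so $\vec\mu_l\wsc\vec\mu$ along a further subsequence. Arguing as in \cite[Proposition 4.30]{Mag12} (cf.\ \cite{Bellettini23}), $\vec\mu$ is absolutely continuous with respect to $V$, hence $\vec\mu=\vec WV$ for some $\vec W\in L^1(V)$; and $\vec W\in L^p(V)$ follows by applying \cite[Theorem 9.6]{Mantegazza96} along $V_l\to V$, $\vec W^lV_l\wsc\vec WV$ with the convex lower semicontinuous integrand $f(\xi)=\abs{\xi}^p$. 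Since each $(V_l,\Gamma_l)$ satisfies \eqref{eq:ICV-1stvariation-1} and every term of that identity is the pairing of a fixed continuous function (compactly supported on $\mfR^{n+1}\times\mfR^{n+1}$, respectively continuous on the compact $G^o_{n,\beta}(S)$) with $V_l$, $\vec\mu_l$, or $\Gamma_l$, I may pass to the limit and obtain \eqref{eq:ICV-1stvariation-1} for $(V,\Gamma)$ with coefficients $W_{ia}$; that is, $V$ has curvature $W_{ia}\in L^p(V)$ and prescribed contact angle $\beta$ with $S$ along $\Gamma$ in the sense of Definition \ref{Defn:OICV}, as claimed. Beyond the boundary-mass estimate from Lemma \ref{Lem:compare-mass} flagged above, every step is standard Geometric Measure Theory bookkeeping, and the argument is simply that of Theorem \ref{Thm:main} with its prescribed-mean-curvature step (\textbf{Step 4}) omitted.
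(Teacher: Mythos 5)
Your proposal is correct and follows essentially the same route as the paper, which proves Theorem \ref{Thm:general-cpt} by exactly Steps 1--3 of the proof of Theorem \ref{Thm:main} (Remark \ref{Rem:C^2-surface} plus Lemma \ref{Lem:compare-mass} for the boundary-mass bound, Hutchinson's compactness and weak-$\ast$ compactness of $\Gamma_l$, then the $\vec W^lV_l$ limit with \cite[Theorem 9.6]{Mantegazza96} to pass \eqref{eq:ICV-1stvariation-1} to the limit). You also correctly identify the a priori bound on $\mcH^{n-1}(\p M_l)$ via Lemma \ref{Lem:compare-mass} as the only non-routine ingredient, matching Item (3) of Step 1 in the paper.
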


As said, Corollary \ref{Cor:negligible-nodal-set} follows directly from Theorem \ref{Thm:main}.
Now we prove the last Corollary.
\begin{proof}[Proof of Corollary \ref{Cor:converges-perimeter}]
We just need to show ($iii'$) when we replace \eqref{condi:mean-curvature} by \eqref{condi:perimeter-converge}, because the rest of the proof is exactly the same as that of Theorem \ref{Thm:main}.

As shown in Item ($3$) of {\bf Step 1} in the proof of Theorem \ref{Thm:main}, by lower semicontinuity of perimeter,
$D_l$ converges as set of finite perimeter to $D$, meaning that (thanks to De Giorgi's structure theorem) the Gauss Green measure $\mu_{D_l}=\nu_{D_l}\mcH^{n-1}\llcorner(\p^\ast D_l)$ weak star converges to the Gauss Green measure $\mu_D=\nu_{D}\mcH^{n-1}\llcorner(\p^\ast D)$.

By assumption \eqref{condi:perimeter-converge}, we can use \cite[Proposition 4.30 ($ii$)]{Mag12} to Gauss-Green measures and conclude that
$\mcH^{n-1}\llcorner(\p M_l)$ (by construction this is exactly $\norm{\Gamma_l}$), which is by regularity $\mcH^{n-1}\llcorner(\p^\ast D_l)$, weak star converges to $\mcH^{n-1}\llcorner(\p^\ast D)$.
This surpasses Item ($4$) of {\bf Step 1} in the proof of Theorem \ref{Thm:main}.
Using this fact, we deduce in {\bf Step 2} that $\norm{\Gamma}$, to which $\norm{\Gamma_l}$ subsequentially converges to, must be $\mcH^{n-1}\llcorner(\p^\ast D)$.
($iii'$) is thus proved.

\end{proof}

\subsection{Prescribed boundary mean curvature problem}
Before we give the proof of Theorem \ref{Thm:double}, we point out that, though we use the same notation $\mfv(\cdot,\cdot,\cdot,\cdot)$ for $V_l$ and $\Gamma_l^S$, the meanings of them are different.
The former stands for oriented integral $n$-varifolds on $\mfR^{n+1}$, defined in Section \ref{Sec:2-OIV}.
The later stands for oriented integral ($n-1$)-varifolds on the $n$-dimensional Riemannian manifold $S$, which is embedded in $\mfR^{n+1}$, whose precise definition can be found in \cite[Section 5]{Bellettini23}.
\begin{proof}[Proof of Theorem \ref{Thm:double}]

For $l\in\mbN$, by virtue of Remark \ref{Rem:C^2-surface}, taking into account the uniform bounds on area and curvature (see \eqref{condi:area-function-bdry}, \eqref{ineq:curvature-bound-W}), we could apply Lemma \ref{Lem:compare-mass} to $V_l$ and $\Gamma_l$ then deduce that
    \eq{\label{eq:bound-M_l}
    \sup_{l\in\mbN}\{\mcH^n(M_l)\}<\infty.
    }
    
Note also that $D_l$ is a closed set in $S$ with $C^2$-boundary $\p M_l$ (so of course a set of finite perimeter in $S$).
By virtue of \eqref{condi:area-function-bdry}, the convergence \eqref{condi:set-convergence-bdry}, and the lower semicontinuity of perimeter, we conclude that $D=\p^\ast E\cap\p\Om$ is a set of finite perimeter in $S$.

To proceed, thanks to \eqref{eq:bound-M_l} and \eqref{condi:curvature-2}, we could use Theorem \ref{Thm:general-cpt} to find that there exists an oriented integral $n$-varifold $V$, a Radon measure $\Gamma$ on $G^o_{n,\beta}(S)$, to which $V_l,\Gamma_l$ subsequentially converges to respectively, such that $V$ has curvature $W_{ia}\in L^p(V)$ for $i,a\in\{1,\ldots,n+1\}$ and prescribed contact angle $\beta$ along $S$ with $\Gamma$, which proves ($i$).
Note that $\norm{\Gamma_l}$ also subsequentially converges to $\norm{\Gamma}$ by definition of push-forward measure.

On the other hand,
for any $l\in\mbN$,
by construction $\norm{\Gamma_l^S}=\mcH^{n-1}\llcorner(\p M_l)$, therefore
by virtue of \eqref{condi:area-function-bdry}, \eqref{condi:bdry-curvature}, the fact that the $C^2$-hypersurface $\p D_l=\p M_l\subset S$ has mean curvature prescribed by $g^S_l\bar\nu_l$, and that $D_l$ converges in $L^1(S)$ to the set of finite perimeter $D$, we can thus use \cite[Theorem 4]{Bellettini23} (in particular, the Riemannian version of \cite[Theorem 2]{Bellettini23}) to deduce the existence of the oriented integral $(n-1)$-varifolds $\Gamma^S$ (on the Riemannian manifold $S$), to which $\Gamma_l^S$ subsequentially converges to, such that
\begin{itemize}
    \item $\Gamma^S=\mfv(\p^\ast D,\nu_D,1,0)+\mfv(\mcR_S,\bar\nu,\theta_S,\theta_S)$, where $\mcR_S$ is $(n-1)$-rectifiable on $S$; $\bar\nu$ is any measurable choice of unit normal on $\mcR_S\subset S$; $\theta_S:\mcR_S\ra\mbN\setminus\{0\}$ is in $L^1(\mcH^{n-1}\llcorner\mcR_S)$; $\mcR_S\subset\{x\in S:g^S(x)=0\}$.
    $\norm{\Gamma^S}$ is ($n-1$)-rectifiable of class $C^2$.
    \item In terms of tangential vector fields to $S$, the first variation takes the form
    \eq{
    \de(\mfq_\#\Gamma^S)
    =\vec H_\p\norm{\Gamma},
    }
    with
    \eq{
    \vec H_\p(x)
    =\begin{cases}
         g^S(x)\nu_D(x),\quad&\text{on }(\p^\ast D)\setminus\mcR_S,\\
         0\quad&\text{on }\mcR_S.
    \end{cases}
    }
\end{itemize}
In particular, ($ii$) and ($iv$) are proved.

Finally,
by definition of push-forward, $\norm{\Gamma^S_l}$ subsequentially converges to $\norm{\Gamma^S}$.
On the other hand, as shown above, $\norm{\Gamma_l}$ subsequentially converges to $\norm{\Gamma}$.
Combining these two convergence results and note that by construction, $\norm{\Gamma_l^S}=\mcH^{n-1}\llcorner(\p M_l)=\norm{\Gamma_l}$,
we conclude $\norm{\Gamma}=\norm{\Gamma^S}$ as required.
Since $\Gamma$ is a Radon measure on the bundle $G^o_{n,\beta}(S)$, 
\eqref{eq:bdry-contact-weaksense-2} follows immediately from \eqref{eq:<n^o,nu^S>=sinbeta}, which proves ($iii$) and completes the proof.
\end{proof}
\begin{remark}
\normalfont
In fact, the first part of the assumptions in Theorems \ref{Thm:main}, \ref{Thm:double} can be replaced by the following much more general version:

Let $\Om,\{\Om_l\}_{l\in\mbN}$ be bounded domains of class $C^2$, $S_l\coloneqq\p\Om_l$ with $\nu^{S_l}$ the inner unit normal to $\Om_l$ along $S_l$, and
$\beta_l\in C^1(S_l,(0,\pi))$, $\beta\in C^1\in C^1(S,(0,\pi))$ be functions such that
\eq{\label{condi:Om_l,beta_l}
\overline\Om_l\ra\overline\Om\text{ in }C^2\text{-topology}\text{ and }
\beta_l\ra\beta\text{ in }C^1\text{-topology}.
}
For $l\in\mbN$,
let $E_l\subset\Om_l$ be open subset with $D_l\coloneqq\p E_l\cap\p\Om_l$ and $M_l\coloneqq\p E_l\cap\Om_l$ a $C^2$-hypersurface such that $M_l$ meets $S_l$ transversally along $\p M_l$ with contact angle prescribed by the function $\beta_l$.
Then, under the same assumptions \eqref{condi:capillary-angle}-\eqref{condi:mean-curvature}, the conclusions of Theorem \ref{Thm:main} still hold.

To see this, we have to show the following two necessary modifications for the proof of Theorems \ref{Thm:main}, \ref{Thm:double} to work in this case.
First, note that by \eqref{condi:Om_l,beta_l}, we could assume that $\{S_l\}_{l\in\mbN}$ have uniformly bounded geometry, namely, $\{\abs{h_{S_l}}_{C^0(S_l)}\}_{l\in\mbN}$ is uniformly bounded.
This means, when we apply Lemma \ref{Lem:compare-mass} to each $V_l$, the constants $C_l=C_l(n,p,\beta,{\rm diam}(\Om_l),\abs{h_{S_l}}_{C^0(S_l)})$ appearing on the RHS of the estimates are uniformly bounded, and hence the estimate 
\eqref{eq:bound-partial-M_l} in the proof of Theorem \ref{Thm:main} also the estimate \eqref{eq:bound-M_l} in the proof of Theorem \ref{Thm:double} still hold in this case.
Second, by \eqref{condi:Om_l,beta_l} we have the convergences
\eq{
G^o_n(S_l)\ra G^o_n(S),\quad
G^o_{n,\beta_l}(S_l)
\ra G^o_{n,\beta}(S)\text{ as }l\ra\infty,
}
which ensures that the Radon measure $\Gamma$, as the subsequential limit of $\Gamma_l$, is supported on $G^o_{n,\beta}(S)$.
The rest of the proof are exactly the same.
\end{remark}
\begin{remark}
\normalfont
In Theorem \ref{Thm:double}, to show capillary compactness (i.e., conclusion ($i$)), there is another weak solution that we can associate the regular capillary hypersurface $M_l$ with, namely, \textit{varifolds with capillary boundary} introduced in \cite{WZ}, which are un-oriented varifolds with mean curvature but not full curvature.

In that case, we could use the general integral compactness theorem for varifolds with capillary boundary established in \cite{WZ} to obtain the desired result, and we shall replace \eqref{condi:curvature-2} by the assumption that for some $p>1$,
\eq{
\sup_{l\in\mbN}\left\{\mcH^n(M_l)+\int_{M_l}\abs{\vec H_l}^p\rd\mcH^n\right\}
<\infty.
}
Here we point out that the uniform bounds on $\{\mcH^n(M_l)\}_{l\in\mbN}$ is not really a condition since it can be inferred from the uniform bounds on $\{\mcH^{n-1}(\p M_l)\}_{l\in\mbN}$ and $\{\int_{M_l}\abs{\vec H_l}^p\rd\mcH^n\}_{l\in\mbN}$.
More precisely, for each $l\in\mbN$, because $M_l$ is a regular capillary hypersurface, the following first variation formula holds: $\forall\varphi\in C^1_c(\mfR^{n+1};\mfR^{n+1})$,
\eq{
\int_{M_l}{\rm div}_{M_l}\varphi(x)\rd\mcH^n(x)
=\int_{M_l}\left<\vec H_l(x),\varphi(x)\right>\rd\mcH^n(x)-\int_{\p M_l}\left<\mu_l(x),\varphi(x)\right>\rd\mcH^{n-1}(x),
}
where $\mu_l$ is the inwards pointing unit co-normal of $M_l$ along $\p M_l$, satisfying the capillary boundary condition $\left<\mu_l(x),\nu^S(x)\right>=\sin\beta(x),\quad\forall x\in\p M_l$.
Thus one can argue as Lemma \ref{Lem:compare-mass} to show that 
\eq{
\mcH^n(M_l)
\leq C(n,p,\beta,{\rm diam}(\Om),\abs{h_S}_{C^0(S)})\left(\mcH^{n-1}(\p M_l)+\norm{\vec H_l}_{L^p(M_l)}^p\right).
}
\end{remark}


\bibliography{BibTemplate.bib}
\bibliographystyle{amsplain}

\end{document}